\crefname{thm}{Theorem}{Theorems}
\newcounter{mainthmcnt}
\title{Reciprocal Maximum Likelihood Degrees of Brownian Motion Tree Models}
\date{14/05/2021}
\keywords{Brownian motion tree model, maximum likelihood degree, toric fiber product}
\author{T. Boege}
\address{%
MPI for Mathematics in the Sciences, Germany \\
\email{tobias.boege@mis.mpg.de}
}
\author{J. I. Coons}
\address{%
St John’s College, University of Oxford, United Kingdom \\
\email{coons@maths.ox.ac.uk}}
\author{C. Eur}
\address{%
Harvard University, USA \\
\email{ceur@math.harvard.edu}
} 
\author{A. Maraj}
\address{%
University of Michigan, USA \\
\email{maraja@umich.edu}
}
\author{F. R\"ottger}
\address{Universit{\'e} de Gen{\`e}ve, Switzerland \\
\email{frank.roettger@unige.ch}
}
\date{2007/01/01}
\newcommand{\trace}{\text{trace}}
\newcommand{\bfp}{\mathbf{p}}
\newcommand{\bfu}{\mathbf{u}}
\newcommand{\ZZ}{\mathbb{Z}}
\newcommand{\CC}{\mathbb{C}}
\newcommand{\Acal}{\mathcal{A}}
\newcommand{\Lcal}{\mathcal{L}}
\newcommand{\Pcal}{\mathcal{P}}
\newcommand{\cL}{\mathcal{L}}
\newcommand{\cLI}{\mathcal{L}^{-1}}
\newcommand{\Mcal}{\mathcal{M}}
\newcommand{\Hcal}{\mathcal{H}}
\newcommand{\rowspan}{\mathrm{rowspan}}
\newcommand{\lca}{\mathrm{lca}}
\newcommand{\de}{{\rm des}}
\newcommand{\mld}{\mathrm{mld}}
\newcommand{\rmld}{\mathrm{rmld}}
\newcommand{\Lv}{\mathrm{Lv}}
\newcommand{\In}{\mathrm{Int}}
\newcommand{\outdeg}{\mathrm{outdeg}}
\newcommand{\ol}{\overline}
\begin{document}
\maketitle

\begin{abstract}
 We give an explicit formula for the reciprocal maximum likelihood degree of Brownian motion tree models.
To achieve this, we connect them to certain toric (or log-linear) models, and express the Brownian motion tree model of an arbitrary tree as a toric fiber product of star tree models.
\end{abstract}

\section{Introduction}
Let $T$ be a rooted tree on leaves $0, \dots, n$ with the leaf labeled $0$ as the root and with all edges directed away from the root. We denote the set of leaves of $T$ by $\Lv(T) = \{0, \dots, n\}$ and the set of internal vertices of $T$ by $\In(T)$. The \emph{out-degree} of vertex $v$, denoted $\outdeg(v)$, is the number of edges directed out of $v$.
For two leaves $i$ and $j$, denote their most recent common ancestor by $\operatorname{lca}(i,j)$.
We assume that $T$ does not  have any vertices of degree two.

\pagebreak % XXX

The Brownian motion tree model on $T$  identifies the non-root leaves of the tree with random variables that are jointly distributed according to a multivariate Gaussian distribution with mean 0. To each vertex $v$, it assigns a parameter $t_v$ such that the covariance of two non-root leaves $i$ and $j$ is $t_{\text{lca}(i,j)}$.
In other words, this model is a linear Gaussian covariance model $\Mcal_T = \Lcal_T \cap \mathbb{S}_{>0}^n$, where $\mathbb{S}^n_{>0}$ is the set of $n \times n$ positive-definite matrices and $\Lcal_T$ is the  subspace of the space of symmetric $n\times n$ matrices $\mathbb{S}^n$ defined by
\[
\Lcal_T=\{ \Sigma \in \mathbb{S}^n \mid \sigma_{ij}=\sigma_{kl}~\text{if}~\text{lca}(i,j)=\text{lca}(k,l) \}.
\]
An example tree and its induced covariance pattern are shown in \Cref{fig:1}.
This model is a Wiener process along $T$, and was first introduced by Felsenstein \cite{F1973} to model trait evolution along phylogenetic trees. For background material on this model and other methods for comparative phylogenetics, see \cite{HP1991}.
See \cite{SUZ2019} for a detailed analysis of the geometry of this model.\\

\begin{figure}[h]
    \centering
\includegraphics[scale=0.3]{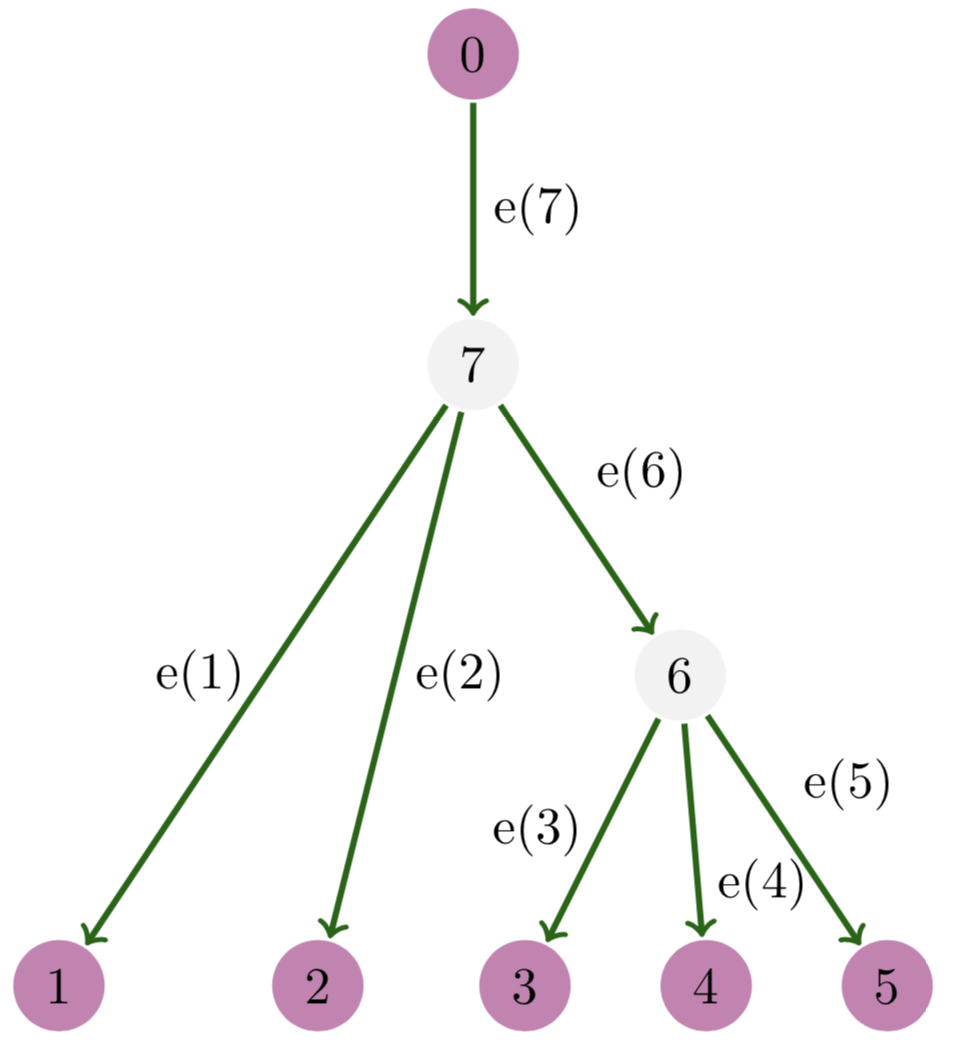}
\qquad
\includegraphics[scale=0.35]{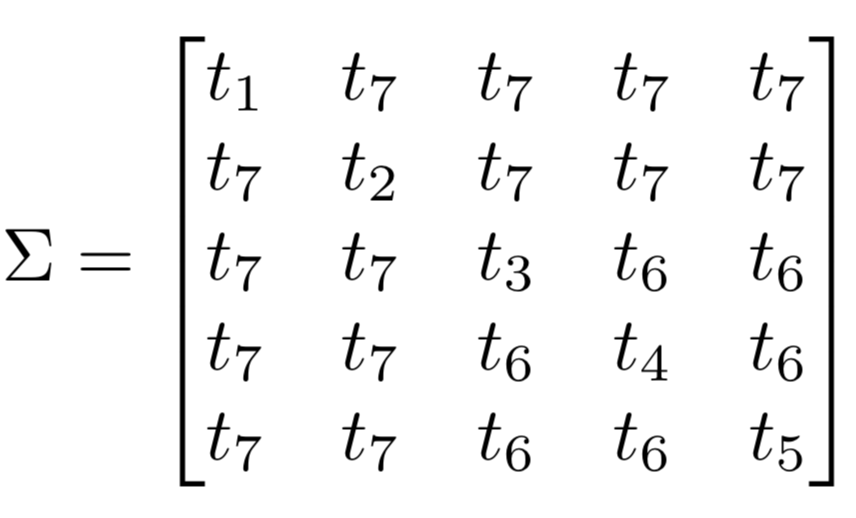}
    \caption{\textit{The given Brownian Motion Tree Model has reciprocal ML-degree $16$.}}
    \label{fig:1}
\end{figure}

In this paper we study properties of the reciprocal maximum likelihood estimation problem for Brownian motion tree models.
The \emph{log-likelihood function} of a linear Gaussian covariance model with an empirical covariance $S$ is the function
$\ell_S: \mathbb{S}_{>0}^n \to~\mathbb{R}$ defined by
\[
  \ell_S(\Sigma) = -\log \det(\Sigma) - \trace(S \Sigma^{-1}).
\]
The maximum likelihood estimator (MLE) is obtained by maximizing this log-likelihood function, which is equivalent to minimizing the Kullback-Leibler divergence $\text{KL}(S,\Sigma)$. To this optimization problem, one can associate a reciprocal problem which minimizes the ``wrong'' KL divergence $\text{KL}(\Sigma,S)$. This is  equivalent to maximizing the \emph{reciprocal log-likelihood function}:
\begin{align*}
    \ell_S^{\lor}(\Sigma) &= \log \det(\Sigma)-\trace (S^{-1} \Sigma).
\end{align*}
In the language of information theory, the standard MLE problem is obtained by performing the moment projection, or M-projection, of the data onto the statistical model, whereas the reciprocal MLE problem is obtained from the information projection, or I-projection \cite{neumann2011}.
We refer to~\cite[Section 3]{STZ19} and the references therein for more details. Our main interest is in the reciprocal maximum likelihood degree of these models.

\begin{dfn}[ML degree]
The \emph{maximum likelihood degree} of the model $\Mcal_T$, denoted $\mld(\Mcal_T)$, is the number of non-singular complex critical points of $\ell_S$ in parameters from the model $\Mcal_T$, counted with multiplicity, for generic symmetric $S$.
The \emph{reciprocal maximum likelihood degree}, denoted $\rmld(\Mcal_T)$, is defined analogously using the reciprocal likelihood $\ell^{\lor}_S$ in place of $\ell_S$.
\end{dfn}

\begin{rem}
There are different conventions in the literature for defining $\mld$ and $\rmld$ since a linear space of symmetric matrices can be viewed either as a space of covariance matrices or concentration matrices of a statistical model.
Our definitions of mld and rmld align with those in \cite{STZ19}, where the rmld is obtained by maximizing $\ell^{\lor}_S$ over the space of covariance matrices.
However, our notion of $\rmld$ coincides with that of $\mld$ in Section 4 of \cite{FMS2020}; this is because the authors of \cite{FMS2020} view $\Lcal$ as a space of concentration matrices.
\end{rem}

Knowledge of the ML-degree is useful for numerical methods in maximum likelihood estimation \cite{SW2005,STZ19}.
Our main result is a formula for the reciprocal ML-degree for Brownian motion tree models.

\begin{thm}\label{t:dualMLdegree}
The reciprocal ML-degree of the Brownian motion tree model $\Mcal_T$ is
\[
    \rmld(\Mcal_T) =\prod_{v\in \In(T)}(2^{\operatorname{outdeg}(v)}-\operatorname{outdeg}(v)-1).
\]
\end{thm}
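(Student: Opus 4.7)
The plan is to exploit the connection (announced in the abstract) between the Brownian motion tree model $\Mcal_T$ and a toric (log-linear) model, and to express $\Mcal_T$ as an iterated toric fiber product of star tree models, one factor for each internal vertex $v \in \In(T)$. Granting this, the formula follows by combining a base computation for stars with the multiplicativity of $\rmld$ under toric fiber products.

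The first step is to invoke the description from \cite{STZ19,SUZ2019}: after passing to the concentration matrix $K = \Sigma^{-1}$, the Zariski closure of the reciprocal model associated to $\Mcal_T$ is an affine toric variety, cut out by binomials. Consequently $\rmld(\Mcal_T)$ equals the degree of this toric variety, which by Kouchnirenko/BKK can be read off as the normalized lattice volume of the Newton polytope $P_T$ attached to the parametrization.

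The second, and central, step is the decomposition: for each internal vertex $v$ with $k = \outdeg(v)$ children, one carves off the star $T_v$ at $v$ and shows that the toric ideal of $\Mcal_T$ is the toric fiber product (in the sense of Sullivant) of the toric ideals of the $\Mcal_{T_v}$ over the shared gluing variables corresponding to the interior edges. I would then prove the corresponding multiplicativity $\rmld(\Mcal_T) = \prod_{v \in \In(T)} \rmld(\Mcal_{T_v})$ either by tracking how the critical equations of $\ell_S^{\lor}$ factor through the gluing, or equivalently by showing that the normalized volumes of the Newton polytopes multiply under this fiber product construction.

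This reduces the theorem to a computation on a single star. For a star with $k$ leaves the covariance matrix has the form $\sigma_{ii} = t_i$ on the diagonal and $\sigma_{ij} = t_r$ off-diagonal, so the entries of $K = \Sigma^{-1}$ expand as signed sums of monomials indexed by the subsets $S \subseteq \{1, \ldots, k\}$ of size at least two, of which there are exactly $2^k - k - 1$. A direct calculation on the star --- solving the reciprocal score equations explicitly or applying Kouchnirenko to the resulting generic system --- identifies this count with $\rmld(\Mcal_{T_v})$. The main obstacle I anticipate is the multiplicativity claim in the previous step, since critical points of the reciprocal likelihood do not a priori decouple across a gluing of statistical models; one must exploit the specific toric structure, together with combinatorial control of how the gluing variables enter the Newton polytopes, to see the product formula.
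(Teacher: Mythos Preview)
Your overall architecture---pass to a toric model, decompose as a toric fiber product of star trees, and compute the star case directly---is exactly the paper's strategy. However, two of your steps conceal genuine gaps.

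First, the assertion that $\rmld(\Mcal_T)$ equals the degree (equivalently, the normalized lattice volume) of the associated toric variety is not automatic. The paper's Theorem~\ref{Thm:DegreesEqual} establishes that $\rmld(\Mcal_T)$ equals the ML-degree of the discrete toric model $\Mcal(A_T)$, and already this requires checking that the affine-linear constraints $K - S^{-1} \in \Lcal_T^{\perp}$ coincide with $A_T\bfp = A_T\bfu$ under the change of coordinates; Example~\ref{eg:badToric} shows this can fail even when $\Lcal^{-1}$ is toric. Moreover, the ML-degree of a toric model is in general only bounded above by the degree of the variety: one must exclude solutions on the coordinate hyperplanes and at infinity. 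For the star $S_n$ the paper proves a separate lemma (Lemma~\ref{lem:zerointersection}) that $\Lcal_{S_n}^{\perp} \cap \Lcal_{S_n}^{-1} = \{0\}$, which is precisely what rules out points at infinity and forces the ML-degree to equal the projective degree $2^n - n - 1$. A bare Kouchnirenko/BKK invocation does not give you this.

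Second, your explanation of the star count is not correct. The entries of $K = \Sigma^{-1}$ for a star are rational functions in $t_1,\dots,t_n,t_r$, not sums of monomials indexed by subsets of $\{1,\dots,n\}$ of size at least two. The number $2^n - n - 1$ arises in the paper as the degree of the toric variety of the second hypersimplex $\Delta(2,n+1)$ (quoted from \cite{DST1995}), not from any monomial expansion of $K$.

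Finally, the multiplicativity under toric fiber products that you flag as the ``main obstacle'' is in fact available off the shelf: the paper invokes \cite[Theorem~5.5]{AKK19}, which applies because the gluing matrix $\Acal$ is the $3\times 3$ identity and hence has linearly independent columns. No direct volume-product or critical-point-factoring argument is needed.
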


For example, the reciprocal ML-degree of the tree model in \Cref{fig:1} is $16$, since the out-degrees of its two internal vertices are both $3$.

Our proof of \Cref{t:dualMLdegree} broadly consists of three steps.  
In~\cref{s:BMTM_and_loglinear}, we give preliminary definitions and theorems regarding toric models and the toric structure of the Brownian motion tree model as described in \cite{SUZ2019}. Then we show that the reciprocal maximum likelihood estimation problem in a Brownian motion tree model is equivalent to the standard maximum likelihood estimation problem of a toric model.
In~\cref{s:toricfiber} we show that this toric model has a toric fiber product structure as described in \cite{sullivant2007}, which implies that its ML-degree is the product of the ML-degrees of the models associated to two subtrees~\cite{AKK19}.
In~\cref{s:star_tree} we show that the reciprocal ML-degree of the Brownian motion tree model on a \emph{star tree} with $n+1$ leaves is $2^n - n -1$,
which serves as the base case for induction that completes the proof of~\cref{t:dualMLdegree}.

\section{Toric Models}\label{s:BMTM_and_loglinear}

A \emph{toric model}, also known as a \emph{log-linear model}, is a discrete statistical model whose Zariski closure is a toric variety \cite[Definition 6.2.1]{S2018}.
As such, it has a monomial parametrization, which is represented by an integral matrix $A \in \ZZ^{d\times m}$ called its \emph{design matrix}.  We assume throughout that $A$ has the vector of all ones in its rowspan. Its columns $\mathbf a_1, \ldots, \mathbf a_m$ define the monomial map
\begin{equation}\label{eqn:monomialmap}
\phi_A: \CC[p_1, \ldots, p_m] \to \CC[t_1^\pm, \ldots, t_d^\pm] \quad\text{which sends}\quad p_i \mapsto \mathbf{t}^{\mathbf{a}_i}.
\end{equation}
We denote by $I(A) \subset \CC[\bfp]$ the kernel of this map, and write $V(I(A)) \subseteq \CC^m$ for the toric affine subvariety defined by $I(A)$.

The maximum likelihood degree of a discrete statistical model is the number of complex critical points of the log-likelihood function counted with multiplicity \cite{ABBGHHNRS2019}.
In the case of toric models, it is the number of intersection points of the toric variety $V(I(A))$ with a specific affine linear space of complementary dimension.

\begin{prop}\textnormal{\cite[Proposition 7]{ABBGHHNRS2019}}\label{prop:toricMLD} Let $A \in \mathbb{Z}$ have the vector of all ones in its rowspan.
The maximum likelihood degree of a toric model $\mathcal M(A)$ with the design matrix $A$  is the number of solutions
\[
\bfp \in V(I(A)) \setminus V(p_1 \dots p_m (\textstyle \sum_{i=1}^mp_i)) \quad \text{satisfying} \quad A \bfp = A \bfu
\]
for generic data $\bfu \in \CC^m$, counted with multiplicity.
\end{prop}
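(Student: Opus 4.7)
The plan is to reduce the critical equations of the log-likelihood function on $\mathcal{M}(A)$ to the intersection problem described. Write a model point in its toric parameterization as $\tilde p_i(\mathbf{t}) = \mathbf{t}^{\mathbf{a}_i}/Z(\mathbf{t})$, where $Z(\mathbf{t}) := \sum_j\mathbf{t}^{\mathbf{a}_j}$, so that for data $\mathbf{u}\in\mathbb{C}^m$ with $N := \sum_i u_i$ the log-likelihood is
$$\ell_\mathbf{u}(\mathbf{t}) = \sum_{i=1}^m u_i\langle\mathbf{a}_i,\log\mathbf{t}\rangle - N\log Z(\mathbf{t}).$$

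First, I would apply $t_k\partial_{t_k}$ for each $k = 1,\dots,d$ to obtain the critical equations $A\mathbf{u} = N\cdot A\tilde{\mathbf{p}}(\mathbf{t})$; setting $\mathbf{p} := N\tilde{\mathbf{p}}(\mathbf{t})$ gives exactly $A\mathbf{p} = A\mathbf{u}$. Next, I would note that because $\mathbf{1}\in\mathrm{rowspan}(A)$, say with $\mathbf{r}^\top A = \mathbf{1}^\top$, the toric variety $V(I(A))$ is invariant under the simultaneous scaling $\mathbf{p}\mapsto\lambda\mathbf{p}$, realized on the parameter side by the cocharacter $t_k\mapsto\mu^{r_k}t_k$. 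In particular $\mathbf{p}\in V(I(A))$, and pairing $A\mathbf{p} = A\mathbf{u}$ with $\mathbf{r}^\top$ automatically recovers the normalization $\sum_i p_i = N$.

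I would then verify that this correspondence is a bijection on the open stratum where $p_1\cdots p_m(\sum_i p_i)\neq 0$. Any such $\mathbf{p}$ rescales to a genuine probability distribution $\mathbf{p}/N$ in the image of the monomial parameterization, and the fiber over this point is precisely the one-parameter gauge subgroup $\{t_k\mapsto\mu^{r_k}t_k\}$, which acts trivially on $\tilde{\mathbf{p}}$. Modulo this gauge, the critical points of $\ell_\mathbf{u}$ correspond bijectively to the admissible $\mathbf{p}$'s in the statement, while the excluded locus is exactly where either the parameterization degenerates ($p_i=0$) or the likelihood becomes undefined ($\sum p_i=0$).

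The main technical obstacle is promoting this set-theoretic bijection to an equality of multiplicities. For generic $\mathbf{u}$ one must verify (i) that the critical locus of $\ell_\mathbf{u}$ modulo gauge is zero-dimensional and reduced, and (ii) that the intersection $V(I(A))\cap\{A\mathbf{p} = A\mathbf{u}\}$ is transverse away from the excluded locus. Both are standard genericity statements in the flat family parameterized by $\mathbf{u}$: upper semicontinuity of the intersection number, combined with the existence of at least one $\mathbf{u}$ (for example, one making the ML problem strictly convex in the log-coordinates on the real interior) at which the intersection is transverse, pins down the generic count to equal $\mathrm{mld}(\mathcal{M}(A))$.
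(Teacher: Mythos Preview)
The paper does not supply its own proof of this proposition: it is quoted verbatim as \cite[Proposition~7]{ABBGHHNRS2019} and used as a black box throughout. So there is no ``paper's own proof'' to compare against.

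Your sketch follows the standard derivation (Birch's equations for log-linear models) and is essentially correct in outline. One imprecision worth flagging: you write that the fiber of $\mathbf t\mapsto \tilde{\mathbf p}(\mathbf t)$ over a point of the open torus orbit is ``precisely the one-parameter gauge subgroup'' given by the cocharacter $\mathbf r$. In general the fiber of the unnormalized map $\mathbf t\mapsto(\mathbf t^{\mathbf a_i})_i$ is the kernel of the induced homomorphism $(\mathbb C^*)^d\to(\mathbb C^*)^m$, a torus of dimension $d-\operatorname{rank}(A)$, and the normalization does not add an extra direction precisely because $\mathbf 1\in\operatorname{rowspan}(A)$. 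So unless $A$ has full row rank the gauge group is larger than one-dimensional. This does not affect the final count, since the ML degree is by definition the number of critical \emph{model points} (equivalently, critical $\mathbf t$ modulo the full gauge group), but the sentence as written is not quite right. The multiplicity/transversality paragraph is also more of a wish list than an argument; for a complete proof you would either invoke the machinery in the cited reference or show directly that the linear space $\{A\mathbf p=A\mathbf u\}$ meets $V(I(A))$ properly for generic $\mathbf u$ and that the scheme-theoretic intersection computes the ML degree.
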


In this section, we show that the reciprocal ML-degree of a Brownian motion tree model is equal to the ML-degree of a toric model.  Let $\Lcal_T^{-1}$ be the Zariski closure of $\{ \Sigma^{-1} \in \mathbb S^n \mid \text{$\Sigma \in \Lcal_T$ invertible} \}$. Our starting point is a result from \cite{SUZ2019} which states that $\Lcal_T^{-1}$ is toric under a linear change of coordinates.

Let $\Lcal_T^{-1} \subset \mathbb{S}^n$ with coordinates $K = (k_{ij})_{1 \leq i \leq j \leq n}$. Define new coordinates $\bfp = (p_{ij})_{0 \leq i < j \leq n}$ with change of coordinates $\bfp(K)$ given by
\begin{equation}\label{eq:changeOfCoordinates} 
\begin{split}
   p_{ij} &=-k_{ij} \quad\text{for}~1\le i< j\le n, \quad\text{and}\\
   p_{0i} &=\sum_{j=1}^n k_{ij} \quad\text{for}~1\le i \le n.
\end{split}
\end{equation}
The subscripts on each $p_{ij}$ are unordered sets; in other words, when $j>i$, we may write $p_{ji} = p_{ij}$.
Let $A_T \in \ZZ^{(|\operatorname{Vert}(T)|-1) \times \binom{n+1}{2}}$ be the matrix with rows corresponding to non-root vertices of $T$ and columns to pairs of leaves in $T$, defined by
\begin{equation}\label{eq:ATdef}
A_T(v,\{i, j\}) =
\begin{cases}
1 \text{ if } v = i \text{ or } v = j, \\
1 \text{ if } v = \lca(i,j), \\
0 \text{ otherwise.}
\end{cases}
\end{equation}
We can now state the key result from \cite{SUZ2019}.

\begin{thm}\label{thm:SUZmain}\textnormal{\cite[Theorem 1.2, Equation (10) \& Equation (11)]{SUZ2019}}
Let $\Lcal_T^{-1}$ be the Zariski closure of $\{ \Sigma^{-1} \in \mathbb S^n \mid \text{$\Sigma \in \Lcal_T$ invertible} \}$.  
After the linear change of coordinates $\bfp(K)$, the variety $\Lcal_T^{-1}$ is toric with defining matrix $A_T$. It is generated by the quadratic binomials,
\[
p_{ac}p_{bd} - p_{ad}p_{bc},
\]
where $a,b,c,d$ are distinct and $\{a,b\}$ and $\{c,d\}$ are the cherries of the 4-leaf subtree they induce.
\end{thm}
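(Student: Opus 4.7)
The plan is to establish the two claims separately: first, the toric parametrization of $\Lcal_T^{-1}$ with design matrix $A_T$, and then the description of the toric ideal by the stated quadratic binomials.

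For the parametrization, I would induct on the number of internal vertices of $T$. In the base case of a star tree with unique internal vertex $v$ and leaves $0, 1, \dots, n$, the covariance matrix takes the rank-one-plus-diagonal form $\Sigma = t_v \mathbf{1}\mathbf{1}^T + \operatorname{diag}(t_i - t_v)_i$, so the Sherman--Morrison formula produces $K = \Sigma^{-1}$ explicitly. Setting $d_i := t_i - t_v$ and $\gamma := 1 + t_v \sum_k d_k^{-1}$, a direct calculation yields $p_{0i} = 1/(d_i \gamma)$ and $p_{ij} = t_v/(d_i d_j \gamma)$. Introducing new parameters $\mu_i := 1/(d_i \gamma)$ and $\mu_v := t_v \gamma$ then gives $p_{0i} = \mu_i$ and $p_{ij} = \mu_v \mu_i \mu_j$, matching the columns of $A_T$ (a single $1$ at row $i$ for column $\{0,i\}$, and $1$s at rows $i, j, v$ for column $\{i,j\}$). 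For the inductive step, I would select an internal vertex $u$ all of whose children are leaves (which exists by picking one at maximum depth) and use the Schur complement, combined with Sherman--Morrison on the leaf-block, to express $K$ in terms of the concentration matrix of the pruned tree obtained by contracting $u$'s leaf children into $u$. The inductive hypothesis then produces the required monomial form after identifying the parameter of the contracted leaf with an appropriate product of the original parameters.

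For the binomials, the key combinatorial content is that whenever $\{a,b\}$ and $\{c,d\}$ are the two cherries of the induced $4$-leaf subtree on distinct leaves $a,b,c,d$, all four ancestors $\lca(a,c)$, $\lca(a,d)$, $\lca(b,c)$, $\lca(b,d)$ coincide with the unique interior vertex of that subtree separating the two cherries. This immediately gives the column identity $A_T(-,\{a,c\}) + A_T(-,\{b,d\}) = A_T(-,\{a,d\}) + A_T(-,\{b,c\})$, placing $p_{ac}p_{bd} - p_{ad}p_{bc}$ in $I(A_T)$. The reverse containment --- that these quadratic binomials generate \emph{all} of $I(A_T)$ --- is the main obstacle I anticipate. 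I would address it by introducing a tree-compatible term order and verifying Buchberger's $S$-pair criterion, with the aim of showing that overlaps arising in $5$-leaf configurations telescope into sums of $4$-leaf cherry-swap relations. Alternatively, the toric fiber product decomposition developed in the next section reduces the generation problem recursively to the star-tree case, where the ideal is small enough to verify directly.
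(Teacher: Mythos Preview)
The paper contains no proof of this statement: it is quoted verbatim from \cite{SUZ2019} and used as a black box throughout. There is therefore nothing in the present paper to compare your proposal against. If your aim was to supply a self-contained argument where the paper defers to the literature, you should say so explicitly; otherwise a reader will look in vain for a ``paper's own proof'' to benchmark.

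On the substance of your outline: the star-tree base case is correct (your Sherman--Morrison computation checks out and the reparametrization $\mu_i = 1/(d_i\gamma)$, $\mu_v = t_v\gamma$ does recover the columns of $A_T$), and the containment $p_{ac}p_{bd}-p_{ad}p_{bc}\in I(A_T)$ via the common $\lca$ is fine. The inductive step is only sketched; the Schur-complement reduction is plausible but the identification of the contracted-leaf parameter with ``an appropriate product of the original parameters'' is exactly where the work lies, and you have not done it.

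One genuine issue: your proposed alternative route to generation---appealing to ``the toric fiber product decomposition developed in the next section''---is circular within this paper. \Cref{prop:multihom}, which is required even to \emph{define} the toric fiber product $I(B_{T'}^\star)\times_{\Acal} I(B_{S_m}^\star)$, invokes precisely the generator description of \Cref{thm:SUZmain} that you are trying to establish. So if you want a self-contained proof, you must either carry out the Buchberger verification you mention, or first prove multi-homogeneity of $I(B_{T'}^\star)$ and $I(B_{S_m}^\star)$ by some other means (e.g.\ directly from the grading on $B_T$), before the fiber-product machinery becomes available.
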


See Example \ref{eg:BT} for the matrix $A_T$ of the tree $T$ in \Cref{fig:1}.
We can now state the main result of this section.
\begin{thm}\label{Thm:DegreesEqual}
For a rooted tree $T$, the reciprocal ML-degree of the Brownian motion tree model on $T$ and the ML-degree of the toric model $\Mcal(A_T)$ are both equal to the degree of $V(I_T) \cap V(\langle A_T\bfp - A_T\bfu \rangle)$ for a generic choice of $\bfu$.
\end{thm}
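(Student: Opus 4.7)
The plan is to establish the three-way equality by matching each of the two ``model'' quantities -- the rmld of $\Mcal_T$ and the ML-degree of $\Mcal(A_T)$ -- with the common geometric quantity $\deg\bigl(V(I_T)\cap V(\langle A_T\bfp - A_T\bfu\rangle)\bigr)$ for generic $\bfu$. The equality with the toric ML-degree is essentially a translation of \Cref{prop:toricMLD} applied to $A = A_T$: for generic $\bfu$ the intersection scheme is zero-dimensional, reduced by a Bertini-type genericity argument, and disjoint from the toric boundary divisor $V(p_1\cdots p_m(\sum_i p_i))$, which, being a proper subvariety of $V(I_T)$, is missed by a generic linear translate of complementary dimension. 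Hence its degree counts precisely the solutions called out in the proposition.

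For the equality with the rmld I would first set up the critical-point equations of $\ell_S^\vee$. The space $\Lcal_T$ admits a basis of symmetric $0/1$-matrices $E_v$ indexed by non-root vertices $v$, with $(E_v)_{ij}=1$ iff $\lca(i,j)=v$. Writing $\Sigma=\sum_v t_v E_v$ and differentiating in $t_v$ yields
\[
\trace\bigl((\Sigma^{-1}-S^{-1})E_v\bigr)=0 \quad \text{for each non-root } v,
\]
so in the coordinates $K=\Sigma^{-1}$ the critical locus is exactly $\Lcal_T^{-1}\cap\bigl(S^{-1}+\Lcal_T^\perp\bigr)$, where $\Lcal_T^\perp$ denotes the orthogonal complement under the trace pairing on $\mathbb{S}^n$.

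Next I would push this description through the linear change of coordinates of (\ref{eq:changeOfCoordinates}). By \Cref{thm:SUZmain} it sends $\Lcal_T^{-1}$ onto $V(I_T)$, and a direct substitution shows that for a non-root leaf $\ell$ one has $(A_T\bfp)_\ell = k_{\ell\ell}$, via telescoping cancellation between $p_{0\ell}=\sum_j k_{\ell j}$ and the off-diagonal $p_{\ell k}=-k_{\ell k}$, while for an internal vertex $v$ one has $(A_T\bfp)_v = -\sum_{i<j,\,\lca(i,j)=v} k_{ij}$. In either case $(A_T\bfp)_v$ is a nonzero scalar multiple of $\trace(KE_v)$, so the affine equations $\trace\bigl((K-S^{-1})E_v\bigr)=0$ translate to $A_T\bfp = A_T\bfu$ with $\bfu := \bfp(S^{-1})$. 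Because $\bfp(K)$ is linear and invertible, this correspondence is a bijection on points and preserves intersection multiplicities, carrying critical points of $\ell_S^\vee$ onto $V(I_T)\cap V(\langle A_T\bfp - A_T\bfu\rangle)$; for generic $S$ the image $\bfu$ is generic, so the first paragraph closes the argument.

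The main obstacle is the combinatorial bookkeeping that identifies $(A_T\bfp)_v$ with a scalar multiple of $\trace(KE_v)$: the leaf case crucially exploits the asymmetric definition of $\bfp(K)$, where $p_{0\ell}$ is a sum of $K$-entries while $p_{ij}$ for $i,j\geq 1$ is a single entry, and the nonvanishing of the resulting scalars must be verified so that the affine constraint descends faithfully. A secondary but routine point is confirming that for generic $S$ the critical $\Sigma$ is invertible (equivalently $\bfp$ lies in the toric open orbit) and the intersection is transverse; each obstruction to this is a proper subvariety of parameter space, hence avoided generically.
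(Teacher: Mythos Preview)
Your approach is correct and essentially the same as the paper's: you derive the critical equations for $\ell_S^\vee$ as $K\in\Lcal_T^{-1}$ and $K-S^{-1}\in\Lcal_T^\perp$, verify that under the change of coordinates $\bfp(K)$ the orthogonality condition becomes $A_T\bfp=A_T\bfu$ (the paper does the same computation, arriving at \Cref{Eqn:BrownianLinearSystem}), and then appeal to genericity to discard the bad loci $V(\det K)$ and $\Hcal=V\bigl((\sum p_{ij})\prod p_{ij}\bigr)$. The one point you flag as ``routine'' is made explicit in the paper: it exhibits the all-ones point and the identity matrix to witness that $V(I_T)$ is contained in neither $\Hcal$ nor $V(\det K)$, which is the hypothesis needed for the dimension-count argument (\Cref{lem:BadLocus}) to guarantee that a generic translate of $\ker A_T$ misses these loci.
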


The theorem can fail for linear covariance models not arising from Brownian tree models:
Example~\ref{eg:badToric} displays a linear subspace $\Lcal \subset \mathbb S^n$ of symmetric matrices such that $\Lcal^{-1}$, the Zariski closure of $\{\Sigma^{-1} \in \mathbb{S}^n \mid \Sigma \in \Lcal \text{ invertible} \}$, is a toric variety embedded in $\mathbb S^n$ via a monomial map, but the reciprocal ML-degree of the linear covariance model defined by $\Lcal$ is not equal to the ML-degree of the toric model defined by the embedded toric variety $\Lcal^{-1}$.

\medskip
We prepare the proof of Theorem~\ref{Thm:DegreesEqual} with two lemmas.  The first lemma is a standard computation in the maximum likelihood estimation of linear covariance models.  For a proof, see \cite[Proposition 3.3]{STZ19} or \cite[Equation (11)]{SU2010}. Endow the space of symmetric matrices $\mathbb{S}^n$ with the standard inner product $\langle A,B \rangle = \trace(A B)$.  For a linear subspace $\Lcal \subseteq \mathbb S^n$, denote by $\Lcal^\perp$ its orthogonal complement.

\begin{lemma}
\label{p:computeRmld}
The reciprocal ML-degree of the linear covariance model specified by $\cL$ is the number of solutions, counted with multiplicity, to the equations
\[
\Sigma \in \cL, \quad 
\Sigma K = \mathrm{Id}, \quad\text{and}\quad
K - S^{-1} \in \cL^{\perp}
\]
in the $2\cdot\binom{n+1}{2}$ entries of $\Sigma$ and $K$, for a generic choice of a sample concentration matrix $S^{-1}$.
\end{lemma}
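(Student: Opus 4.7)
The plan is to derive the first-order optimality conditions for maximizing $\ell_S^{\vee}$ over $\cL \cap \mathbb{S}^n_{>0}$ and then algebraicize them by substituting an auxiliary symmetric matrix $K$ for $\Sigma^{-1}$, so that the matrix inversion becomes the polynomial relation $\Sigma K = \mathrm{Id}$.

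First I would compute $\nabla \ell_S^{\vee}$ on $\mathbb{S}^n$ with respect to the trace inner product. The two standard matrix-calculus identities $\nabla_{\Sigma}\log\det(\Sigma) = \Sigma^{-1}$ and $\nabla_{\Sigma}\trace(S^{-1}\Sigma) = S^{-1}$ give
\[
\nabla\ell_S^{\vee}(\Sigma) = \Sigma^{-1} - S^{-1}.
\]
Since $\cL$ is a linear subspace, its tangent space at any point is $\cL$ itself; the critical points of the restriction $\ell_S^{\vee}|_{\cL \cap \mathbb{S}^n_{>0}}$ are therefore those invertible $\Sigma \in \cL$ for which $\nabla \ell_S^{\vee}(\Sigma)$ is orthogonal to $\cL$, that is, $\Sigma^{-1} - S^{-1} \in \cL^{\perp}$.

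Next I would substitute $K$ for $\Sigma^{-1}$ and impose the polynomial relation $\Sigma K = \mathrm{Id}$, yielding exactly the system in the statement. On the open locus $\{\det \Sigma \neq 0\}$, the equation $\Sigma K = \mathrm{Id}$ determines $K$ uniquely as $\Sigma^{-1}$, so the real/complex analytic critical points of $\ell_S^{\vee}|_{\cL}$ are in bijection with the solutions of the algebraic system whose second coordinate is invertible.

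The delicate point, and the real content of the lemma, is to verify that the scheme-theoretic solution count matches $\rmld(\Mcal)$, counted with multiplicity. For this I would observe that $S^{-1}$ enters only linearly, and only in the single equation $K - S^{-1} \in \cL^{\perp}$, so the family of systems parametrized by $S^{-1}$ is a linear translation. A Bertini/Sard-type argument then shows that for generic $S^{-1}$ every solution is a simple root of the algebraic system and none lies on the hypersurface $\det \Sigma = 0$; combined with the bijection above this identifies the count with $\rmld(\Mcal)$, as desired.
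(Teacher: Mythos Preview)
Your derivation is correct and is essentially the standard computation: differentiate $\ell_S^{\vee}(\Sigma)=\log\det\Sigma-\trace(S^{-1}\Sigma)$ to obtain $\nabla\ell_S^{\vee}(\Sigma)=\Sigma^{-1}-S^{-1}$, project onto the linear subspace $\cL$, and then substitute $K$ for $\Sigma^{-1}$ via the polynomial relation $\Sigma K=\mathrm{Id}$. The paper itself does not give a proof of this lemma; it simply cites \cite[Proposition~3.3]{STZ19} and \cite[Equation~(11)]{SU2010}, where exactly this computation is carried out. So there is nothing to compare beyond noting that your argument reproduces the referenced one.

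One small simplification: your last paragraph worries about excluding solutions on the locus $\det\Sigma=0$, but this is automatic. Any solution of the algebraic system satisfies $\Sigma K=\mathrm{Id}$, hence $\det\Sigma\cdot\det K=1$, so $\Sigma$ is forced to be invertible and $K=\Sigma^{-1}$. Thus the set-theoretic bijection between critical points and solutions of the algebraic system holds on the nose, with no genericity hypothesis needed for that step; and since $\Sigma\mapsto(\Sigma,\Sigma^{-1})$ is an isomorphism onto its image over $\{\det\Sigma\neq 0\}$, the scheme-theoretic multiplicities agree as well. The genericity of $S^{-1}$ is used only to ensure the count is finite and independent of $S$, which is already built into the definition of $\rmld$.
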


The next lemma is a general geometric observation.

\begin{lemma}\label{lem:BadLocus}
Let $X$ be the vanishing locus in $\CC^n$ of a family of polynomials in $n$ variables, and suppose that $X$ has dimension $d$ with every $d$-dimensional irreducible component not contained in a hypersurface $H$.  Let $L\subset \CC^n$ be a linear subspace of dimension $n-d$.  Then, for a general $\overline{w}\in \CC^n/L$, the intersection $X \cap (L+w)$ lies in $X\setminus H$.
\end{lemma}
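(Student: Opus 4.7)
The plan is to use the quotient projection $\pi\colon \CC^n \to \CC^n/L$, whose fibers are precisely the translates $L+w$. Since $\dim L = n-d$, we have $\dim \CC^n/L = d$, which is complementary to $\dim X$.

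First I would show that $\dim(X \cap H) < d$. Write $X = X_1 \cup \dots \cup X_r \cup X'$ where $X_1, \dots, X_r$ are the $d$-dimensional irreducible components and $X'$ collects the lower-dimensional ones. By hypothesis each $X_i \not\subset H$, so $X_i \cap H$ is a proper closed subvariety of the irreducible $X_i$, hence $\dim(X_i \cap H) \le d - 1$. Since $\dim X' < d$ already, it follows that $\dim(X \cap H) < d$.

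Next I would apply Chevalley's theorem: the image $\pi(X \cap H) \subset \CC^n/L$ is constructible, and its Zariski closure $Z$ satisfies $\dim Z \le \dim(X \cap H) < d = \dim \CC^n/L$. Hence $Z$ is a proper closed subvariety, and for any $\overline{w} \in \CC^n/L \setminus Z$ we have $\pi^{-1}(\overline{w}) \cap (X \cap H) = \emptyset$, i.e.\ $(L + w) \cap X \subseteq X \setminus H$. Since $Z$ is proper closed, a general $\overline{w}$ lies outside it, giving the conclusion.

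There is no serious obstacle here; the only thing to be careful about is the (standard) dimension bound on the image of a morphism, which justifies passing from "$X \cap H$ has dimension less than $d$" to "its image under $\pi$ lies in a proper subvariety of the $d$-dimensional target $\CC^n/L$". Everything else is just the fiber interpretation of $\pi$.
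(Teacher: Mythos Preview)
Your proof is correct and follows essentially the same argument as the paper: show $\dim(X\cap H)<d$, then project via $\pi:\CC^n\to\CC^n/L$ and note that the image of $X\cap H$ has dimension strictly less than $d=\dim(\CC^n/L)$, so a general $\overline{w}$ avoids it. If anything, you are slightly more careful than the paper in invoking Chevalley's theorem and taking the closure of the constructible image, whereas the paper simply calls $Z$ an ``algebraic subset''.
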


\begin{proof}
Since no $d$-dimensional component of $X$ is contained in $H$, we have $\dim (X\cap H) < d$. For each $w \in \CC^n$, let $\overline{w}$ denote the image of $w$ under the projection $\pi: \CC^n \rightarrow \CC^n / L$. The algebraic subset
$Z := \{\overline{w}\in \CC^n/L \mid (X \cap H) \cap (L+w) \neq \emptyset\}$ is the image of the restriction $\pi|_{X\cap H}$ of the projection map $\pi$ to $X \cap H$, since $\pi|_{X\cap H}$ maps $x\in X \cap H$ to the $\overline{w}\in \CC^n/L$ satisfying $x\in (L+w)$. Hence, we have $\dim Z \leq \dim (X\cap H) < d = \dim (\CC^n/L)$.  Thus, the set  $(\CC^n/L) \setminus Z$ is a nonempty Zariski dense subset of $\CC^n/L$, and any general $w\in \CC^n$ such that $\overline{w}\in (\CC^n/L) \setminus Z$ satisfies $X\cap (L+w) \subset X\setminus H$.
\end{proof}

% Let $\Lcal$ be a linear space of symmetric matrices and let $\Mcal$ denote the Zariski closure of $\{\Sigma^{-1} \in \mathbb{S}^n \mid \Sigma \in \Lcal \text{ invertible} \}$.
% We note that even in the case where $\Mcal$ is toric, it is not typically the case that the reciprocal ML-degree of the linear covariance model defined by $\Lcal$ is equal to the ML-degree of $\Mcal$.

\begin{exa}\label{eg:badToric}
Let $\Lcal$ be the set of all symmetric matrices of the form
\[
\begin{bmatrix}
a & c & c & c \\
c & b & 0 & 0\\
c & 0 & b & 0\\
c & 0 & 0 & b
\end{bmatrix}.
\]
Then the Zariski closure of the set of all inverses of elements of $\Lcal$ is
\[
\Lcal^{-1} = \Big\{ K\in \mathbb{S}^4 \mid k_{22} = k_{33} =k_{44}, k_{12} = k_{13}=k_{14}, k_{23}=k_{24}=k_{34}, k_{12}^2 = k_{11}k_{23}\Big\}.
\]
Thus $\Lcal^{-1}$ is toric.
One design matrix for the toric variety $\Lcal^{-1}$ is
\[
A = \begin{blockarray}{cccccccccc}
11 & 12 & 13 & 14 & 22 & 23 & 24 & 33 & 34 & 44 \\
\begin{block}{[cccccccccc]}
2 & 1 & 1 & 1 & 0 & 0 & 0 & 0 & 0 & 0\\
0 & 0 & 0 & 0 & 1 & 0 & 0 & 1 & 0 & 1\\
0 & 1 & 1 & 1 & 0 & 2 & 2 & 0 & 2 & 0\\
\end{block}
\end{blockarray}.
\]
Using Lemma \ref{p:computeRmld} and Proposition \ref{prop:toricMLD}, one can compute that the reciprocal ML-degree of the linear covariance model defined by $\Lcal$ is 1, whereas the ML-degree of the toric model $\Mcal(A)$ is 2. 
\end{exa}

The failure of Theorem~\ref{Thm:DegreesEqual} in the above example arises from the fact that the affine linear equations defining $K - S^{-1} \in \Lcal^{\perp}$ are not equivalent to those defining $A \bfp = A \bfu$. In the case of Brownian motion tree models, these affine linear equations are equivalent; showing this comprises much of the following proof of Theorem~\ref{Thm:DegreesEqual}.

\begin{proof}[Proof of \Cref{Thm:DegreesEqual}]
Lemma \ref{p:computeRmld} states that the reciprocal ML-degree of $\Mcal_T$ is the number of invertible matrices $K$ such that $K \in \Lcal_T^{-1}$ and $K - W \in \cL_T^{\perp}$ for a fixed generic $W\in \mathbb{S}^n$.
By \Cref{thm:SUZmain}, the first condition $K\in \Lcal_T^{-1}$ is equivalent to $\bfp(K) \in V(I_T)$.  The second condition $K - W \in \Lcal_{T}^{\perp}$ is equivalent to
\[
\sum_{\substack{1\leq i \leq j \leq n \\ \lca(i,j) = v}} (k_{ij} - w_{ij}) = 0 \quad\text{for each $v \in \operatorname{Vert}(T) \setminus \{0\}$}.
\]
Let $\bfu = \bfp(W)$.
This linear system is equivalent to
\begin{equation}\label{Eqn:BrownianLinearSystem}
\begin{split}
    \sum_{\substack{1\leq i \leq j \leq n \\ \lca(i,j) = v}} (p_{ij} - u_{ij}) = 0 & \text{ for each interior vertex } v \in \In(T), \text{ and}\\
    \sum_{\substack{j=0 \\ j \neq i}}^n (p_{ij} - u_{ij}) = 0 & \text{ for each leaf } i \in \Lv(T)\setminus \{0\}.
\end{split}
\end{equation}
This can be written as $A_T \bfp - A_T \bfu = \mathbf{0}$ with $A_T$ as defined in \Cref{eq:ATdef}.
Therefore the reciprocal ML-degree of the Brownian motion tree model on $T$ is the degree of the subscheme
\[
\big(V(I_T) \cap V(\langle A_T \bfp - A_T \bfu \rangle) \big) \setminus V(\det K) \subset \CC^{\binom{n+1}{2}}
\]
for a generic $\bfu$ where $\det K$ is written as a polynomial in the $\bfp$ coordinates.
Similarly, writing $\Hcal$ for the union of hyperplanes $V((\sum_{i,j} p_{ij}) \prod_{i,j} p_{ij})$, we have from Proposition \ref{prop:toricMLD} that the ML-degree of the toric model $\mathcal M(A_T)$ is the degree of the subscheme
\[
\big(V(I_T) \cap V(\langle A_T \bfp - A_T \bfu \rangle) \big) \setminus \Hcal \subset \CC^{\binom{n+1}{2}}.
\]
Note that $V(I_T)$ is contained in neither $V(\det K)$ nor $\Hcal$.
Indeed, the matrix of all ones is in $V(I_T) \setminus \Hcal$ and the identity matrix is in $V(I_T) \setminus V(\det K)$.
Lemma~\ref{lem:BadLocus} thus implies that for a generic $\bfu$,  the hypersurfaces $V(\det K)$ and $\Hcal$ do not intersect $V(I_T) \cap V(\langle A_T \bfp - A_T \bfu \rangle)$.
Therefore the reciprocal ML-degree of the Brownian motion tree model of $T$ and the ML-degree of $\Mcal(A_T)$ are both equal to the degree of $V(I_T) \cap V(\langle A_T \bfp - A_T \bfu \rangle)$.
\end{proof}

\section{Toric Fiber Products}\label{s:toricfiber}

To compute the ML-degree of the toric model $\Mcal(A_T)$, we show in this section that $I_T$ can be written as a toric fiber product of the ideals of two smaller trees, and consequently deduce that the ML-degree of $\Mcal(A_T)$ is a product of the ML-degrees of the toric models on these subtrees.
For background on the toric fiber product construction, see \cite{sullivant2007}.

We start by introducing a new parametrization of $I_T$ that makes the toric fiber product structure more apparent.
This parametrization is given by the matrix $B_T$ defined as follows.
Since every vertex of $T$ except for the root has in-degree 1, we label each edge of $T$ by $e(v)$ where $v$ is the vertex of $T$ that $e(v)$ is directed into.
Let $E(T)$ denote the edge set of $T$, and let $\Pcal(i,j) \subset E(T)$ denote the set of edges in the unique shortest path in $T$ between two leaves $i$ and $j$.
Define the matrix 
$B_T \in \ZZ^{E(T) \times \binom{n+1}{2}}$ by
\[
B_T(e, \{i,j\}) = \begin{cases}
1 & \text{ if $e \in \Pcal(i,j)$,} \\
0 & \text{ otherwise.}
\end{cases}
\]

\begin{prop}\label{prop:RowspansEqual}
For a rooted tree $T$, one has $\rowspan(A_T) = \rowspan(B_T)$.  In particular, the ideals $I(A_T)$ and $I(B_T)$ are equal.
\end{prop}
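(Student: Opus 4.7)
The plan is to establish the $\mathbb{Q}$-rowspan equality by writing the rows of $B_T$ as explicit combinations of rows of $A_T$ (and vice versa); the equality of toric ideals $I(A_T) = I(B_T)$ then follows from the standard fact that such an ideal depends only on the integer kernel (equivalently, the $\mathbb{Q}$-rowspan) of its defining matrix.

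Both $A_T$ and $B_T$ have rows indexed by the non-root vertices of $T$: the rows of $A_T$ by definition, and the rows of $B_T$ via the identification $e(v) \leftrightarrow v$ (every non-root vertex has a unique incoming edge). For each non-root $v$, let $D(v) \subseteq \{1,\ldots,n\}$ denote the set of non-root leaves in the subtree of $T$ rooted at $v$. Since deleting the edge $e(v)$ disconnects $T$ into the subtree rooted at $v$ and its complement, the edge $e(v)$ lies on the path from leaf $i$ to leaf $j$ iff $i$ and $j$ lie on opposite sides of this cut, giving the combinatorial characterization
\[
B_T(v, \{i,j\}) = 1 \;\iff\; |\{i,j\} \cap D(v)| = 1.
\]

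The heart of the argument is two identities. For every non-root leaf $v$, $D(v) = \{v\}$, and comparing definitions yields $A_T(v, \cdot) = B_T(v, \cdot)$. For every internal non-root vertex $v$ with children $c_1, \ldots, c_k$, I will prove
\[
B_T(v, \cdot) + 2\, A_T(v, \cdot) \;=\; \sum_{l=1}^k B_T(c_l, \cdot).
\]
Using the disjoint decomposition $D(v) = D(c_1) \sqcup \cdots \sqcup D(c_k)$ together with the characterization above, this identity is checked column by column by casework on the position of $\lca(i,j)$ relative to $v$, namely: equal to $v$, a proper ancestor of $v$, a proper descendant of $v$, or incomparable with $v$. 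Some care is required when one of $i, j$ is the root leaf $0$, since then $\lca(0,j)=0$ for every non-root leaf $j$.

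With these identities, both inclusions of rowspans are immediate. Solving the displayed relation for $A_T(v, \cdot)$ with $\mathbb{Q}$-coefficients places each internal row of $A_T$ in $\rowspan(B_T)$, and combined with the leaf equality this gives $\rowspan(A_T) \subseteq \rowspan(B_T)$. Reading the relation in the other direction expresses each internal row of $B_T$ in terms of $A_T(v, \cdot)$ and the $B_T$-rows of the children of $v$; descending induction on the depth of $v$, with the leaf equality as the base case, then shows every row of $B_T$ lies in $\rowspan(A_T)$. The only real bookkeeping hurdle will be verifying the four cases of the key identity cleanly, with a careful treatment of the edge cases involving the root leaf.
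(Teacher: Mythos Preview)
Your argument is correct and rests on the same combinatorial content as the paper's proof: the paper proves the closed-form identity
\[
b^{e(v)}_T \;=\; \sum_{k\in \des\Lv(v)} a_T^{k} \;-\; 2\sum_{k\in \des\In(v)} a_T^{k},
\]
which is exactly what one obtains by unrolling your recursive relation $B_T(v,\cdot) = \sum_l B_T(c_l,\cdot) - 2A_T(v,\cdot)$ down to the leaves (using $B_T(c,\cdot)=A_T(c,\cdot)$ at leaf children). The case analysis you outline for verifying the local identity is the same in spirit as the paper's coordinate-wise verification of the global one. The only real difference is in the reverse containment: the paper obtains $\rowspan(A_T)\subseteq\rowspan(B_T)$ by a separate rank argument, exhibiting $\#\mathrm{Vert}(T)-1$ linearly independent columns of $B_T$, whereas you simply solve your local identity for $A_T(v,\cdot)=\tfrac12\big(\sum_l B_T(c_l,\cdot)-B_T(v,\cdot)\big)$ over $\mathbb{Q}$. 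Your route is a bit tidier in that it avoids the extra rank computation; the paper's route stays over $\ZZ$ and yields the slightly stronger statement that $B_T$ is in the $\ZZ$-rowspan of $A_T$, though only the $\mathbb{Q}$-rowspan equality is needed for $I(A_T)=I(B_T)$.
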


\begin{proof}
We show that matrix $B_T$ can be obtained by applying elementary row operations to $A_T$. Let  $a^v_T$ denote the row of $A_T$ corresponding to vertex $v$, and let $b_T^{e(v)}$ be the row in $B_T$ for edge $e(v)$. For vertex $v$, let $\de\Lv(v)$  be the set of all leaves descended from $v$, and let $\de\In(v)$  be the set of internal vertices descended from $v$.  The following holds. 
\begin{align}
    \label{eq:linearTransf}
   b^{e(v)}_T=\sum\limits_{\substack{k\in \de\Lv(v)}}a_T^{k}-2\sum\limits_{\substack{k\in \de\In(v)}}a_T^{k}.
\end{align}
Note that when $v$ is a leaf, $b_T^{e(v)} = a_T^v$.
The reader may wish to consult Example \ref{eg:BT} at this time.

Indeed, the edge $e(v)$ is in the unique shortest path between leaves $i$ and $j$ if and only if exactly one of these leaves is a descendent of $v$. Without loss of generality, let $i$ be this leaf. Then $i$ is in fact the only vertex descended from $v$ with nonzero $ij$-coordinate in row vectors $a^{i}_{T}$ appearing in \Cref{eq:linearTransf}. So the $ij$-coordinate of the right-hand side of \Cref{eq:linearTransf} is equal to 1. Now, suppose that  $e(v)$ is not in the unique shortest path between leaves $i$ and $j$. There are two cases to consider; either both $i$ and $j$ are descended from $v$, or neither of them are. In the former case, the vertices $k$ descended from $v$  with non-zero entries in the $ij$-coordinate of $a^k_T$ are $i,j$ and $\lca(i,j)$. Hence, the $ij$-coordinate of the right-hand side of \Cref{eq:linearTransf} is $0$. In the latter case, if both $i$ and $j$ are not descended from $v$, their least common ancestor is not in $\de\In(v)$. Hence, the right-hand side of \Cref{eq:linearTransf} is $0$.

%Version 1
%Due to the property that elementary row operations do not change the rowspace of a matrix, $\mathrm{rowspan}(B_T)$ is equal to the $\mathrm{rowspan}(A_T)$. 

%Version 2
Lastly, the two matrices have the same rank.  Indeed, the rank of $A_T$ is  $\mathrm{dim}(\cL_T^{-1})=\mathrm{dim}(\cL_T)=\#\mathrm{Vert}(T)-1.$  Take the set of columns $\{0,i\}$  in $B_T$ together with a column  $\{i_k,j_k\}$ with $\lca(i_k,j_k)=k$ for each internal node $k$ in $T$. This is a linearly independent set of $\#\mathrm{Vert}(T)-1$ vectors, which concludes that $\mathrm{rank}(B_T)\geq \mathrm{rank}(A_T)$. Combined with the fact that $\mathrm{rowspan}(B_T)\subseteq \mathrm{rowspan}(A_T)$, this implies that $A_T$ and $B_T$ have the same rowspan.
\end{proof}

\begin{exa}\label{eg:BT}
The matrix $A_T$ for the tree in Figure \ref{fig:1} is 
\[
{\footnotesize
\begin{blockarray}{cccccccccccccccc}
     & 01 & 02 & 03 & 04 & 05 & 12 & 13 & 14 & 15 & 23 & 24 & 25 & 34 & 35 & 45 \\
 \begin{block}{c[ccccccccccccccc]}
1 & 1 & 0 & 0 & 0 & 0 & 1 & 1 & 1 & 1 & 0 & 0 & 0 & 0 & 0 & 0\\
2 & 0 & 1 & 0 & 0 & 0 & 1 & 0 & 0 & 0 & 1 & 1 & 1 & 0 & 0 & 0\\
3 & 0 & 0 & 1 & 0 & 0 & 0 & 1 & 0 & 0 & 1 & 0 & 0 & 1 & 1 & 0\\
4 & 0 & 0 & 0 & 1 & 0 & 0 & 0 & 1 & 0 & 0 & 1 & 0 & 1 & 0 & 1\\
5 & 0 & 0 & 0 & 0 & 1 & 0 & 0 & 0 & 1 & 0 & 0 & 1 & 0 & 1 & 1\\
6 & 0 & 0 & 0 & 0 & 0 & 0 & 0 & 0 & 0 & 0 & 0 & 0 & 1 & 1 & 1\\
7 & 0 & 0 & 0 & 0 & 0 & 1 & 1 & 1 & 1 & 0 & 0 & 0 & 0 & 0 & 0\\
      \end{block}
\end{blockarray}.}
\]
The matrix $B_T$ for the tree in Figure \ref{fig:1} is 
\[
\footnotesize
\begin{blockarray}{cccccccccccccccc}
      & 01 & 02 & 03 & 04 & 05 & 12 & 13 & 14 & 15 & 23 & 24 & 25 & 34 & 35 & 45 \\
 \begin{block}{c[ccccccccccccccc]}
e(1) & 1 & 0 & 0 & 0 & 0 & 1 & 1 & 1 & 1 & 0 & 0 & 0 & 0 & 0 & 0\\
e(2) & 0 & 1 & 0 & 0 & 0 & 1 & 0 & 0 & 0 & 1 & 1 & 1 & 0 & 0 & 0\\
e(3) & 0 & 0 & 1 & 0 & 0 & 0 & 1 & 0 & 0 & 1 & 0 & 0 & 1 & 1 & 0\\
e(4) & 0 & 0 & 0 & 1 & 0 & 0 & 0 & 1 & 0 & 0 & 1 & 0 & 1 & 0 & 1\\
e(5) & 0 & 0 & 0 & 0 & 1 & 0 & 0 & 0 & 1 & 0 & 0 & 1 & 0 & 1 & 1\\
e(6) & 0 & 0 & 1 & 1 & 1 & 0 & 1 & 1 & 1 & 1 & 1 & 1 & 0 & 0 & 0\\
e(7) & 1 & 1 & 1 & 1 & 1 & 0 & 0 & 0 & 0 & 0 & 0 & 0 & 0 & 0 & 0\\
      \end{block}
\end{blockarray}.
\]
The following are the linear combinations of \Cref{eq:linearTransf}.
\begin{align*}
    &b^{e(i)}_T=a^i_T \text{ for } i=1,2,3,4, 5,\\
    &b^{e(6)}_T
    =a^3_T+a^4_T+a^5_T-2a^6_T
    =b^3_T+b^4_T+b^5_T-2a^6_T,  \text{ and }\\
    &b^{e(7)}_T
    =a^1_T+a^2_T+a^4_T +a^5_T-2a^6_T-2a^7_T
    =b^1_T+b^2_T+b^6_T-2a^7_T.
\end{align*}
\end{exa}

\medskip
In our computation of toric fiber products, it will be necessary to consider the ideal $I(B_T) \subset \CC[p_{ij} \mid 0\leq i < j \leq n]$ in a ring with one extra variable.  More precisely, let $B_T^{\star}$ be the matrix with rows indexed by $E \cup \{ \star \}$ and columns indexed by pairs of elements of $\{0, \dots, n\}$ and the symbol $\star$, whose entries are given by $B_T^{\star}(e, \{i,j\}) = B_T(e, \{i,j\})$, $B_T^{\star}(e,\star) = 0$ for all $e \in E$, $B_T^{\star}(\star, \{i,j\}) = 1$ for each $\{i,j\} \subset \{0,\dots,n\}$ and $B_T^{\star}(\star, \star) = 1$. In other words, $B_T^{\star}$ is obtained from $B_T$ by adding a column of all zeros and then a row of all ones.

\begin{rem}
\label{rem:MLdegreTFP}
Since the all-ones row vector $\mathbf{1}$ is in $\rowspan(B_T)$, the all-ones row $\ol{b}_T^{\star}$ in $B_T^\star$ can be replaced by the row consisting of all zeros except for the 1 in the ${\star}$ column without changing the ideal $I(B_T^\star)$.  Thus, the ideal $I(B_T^{\star})$ is the extension of the ideal $I(B_T) \subset \CC[p_{ij}\mid i,j \in \Lv(T)]$ in the ring with one extra variable $\CC[p_\star, p_{ij} \mid i,j \in \Lv(T)]$.
Consequently, the ML-degree of $I(B_T^{\star})$ is equal to that of $I(B_T)$. 
\end{rem}

Let us now consider a rooted tree $T$ built from two smaller trees in the following way.  Let $S_m$ be the rooted \emph{star tree}; that is, $S_m$ is a tree with a unique internal vertex on $m+1$ leaves. Let $T'$ be an arbitrary rooted tree.  Let $T$ be obtained from $T'$ and $S_m$ by identifying a distinguished leaf edge of $T'$ with the root edge of $S_m$. More precisely, let $\ell$ be a distinguished leaf of $T'$ with direct ancestor $h$. Label the root leaf of $S_m$ by $h$ and let $\ell$ label the unique internal vertex of $S_m$. We obtain $T$ from $T'$ and $S_m$ by identifying the vertices labeled $h$ and $\ell$ and the edge between them. \Cref{fig:2} illustrates such a procedure.  By identifying vertices $6$ and $7$ in the two trees, one obtains the tree in Figure \ref{fig:1}.\\

\begin{figure}[h]
\centering
\includegraphics[scale=0.38]{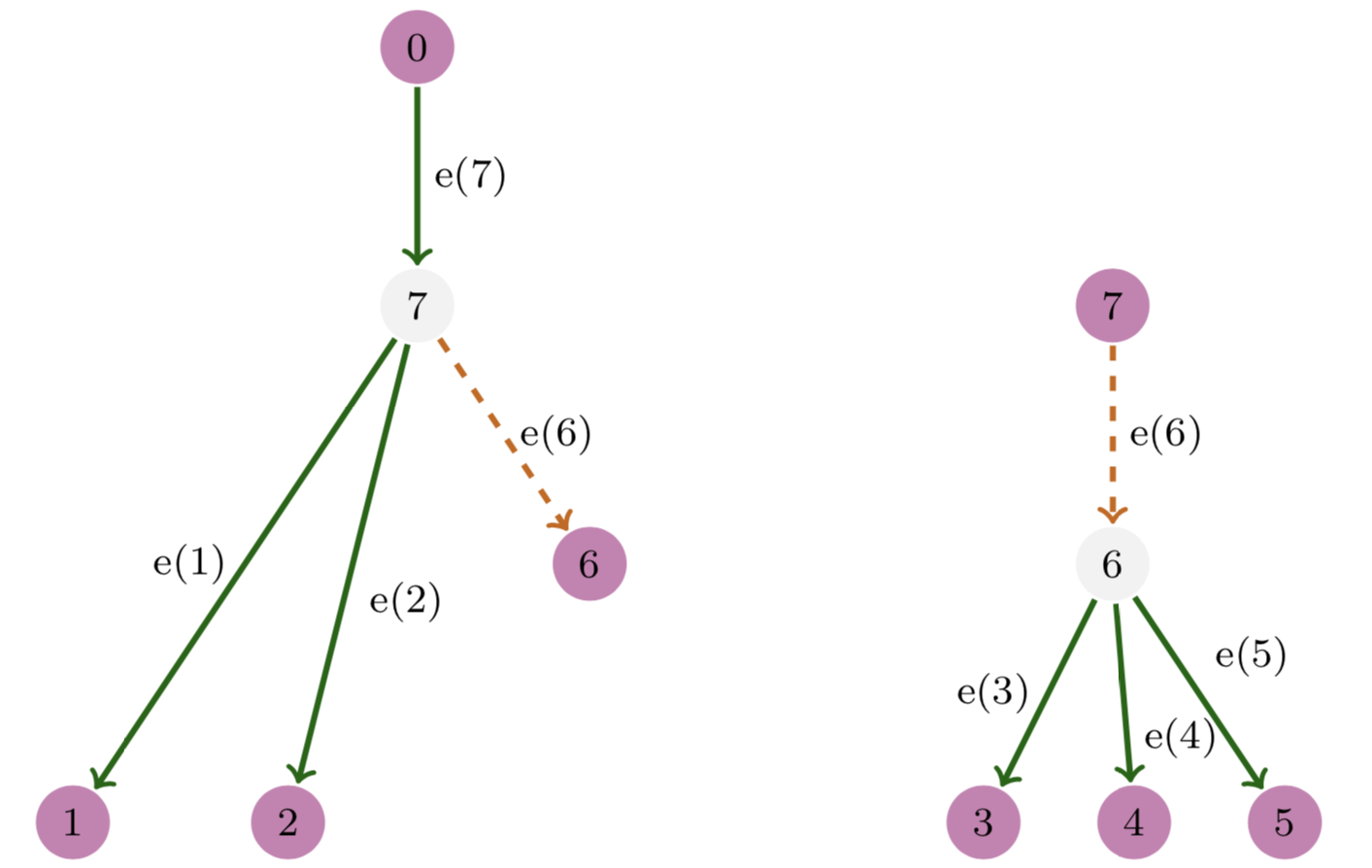}
\caption{\textit{Identifying vertices $6$ and $7$ in these trees produces the tree in Figure~\ref{fig:1}}}
\label{fig:2}
\end{figure}

Let $\CC[\bfp] = \CC[p_{i,j} \mid i, j \in (\Lv(T') \cup \Lv(S_m))\setminus \{h, \ell\}, i \neq j]$, $\CC[\mathbf{q}] = \CC[q_{i,j} \mid i,j\in \Lv(T'), i \neq j]$ and $\CC[\mathbf{r}] = \CC[r_{i,j}\mid i,j\in \Lv(S_m), i \neq j]$ We will show that the ideal, $I(B_T)\subset \CC[\bfp]$
is a toric fiber product of the two ideals $I(B_{T'}^{\star}) \subset \CC[q_{\star}, \mathbf{q}]$ and $I(B_{S_m}^{\star}) \subset \CC[r_{\star}, \mathbf{r}]$.
Following the definition of the toric fiber product in \cite{sullivant2007}, we assign a multigrading to the indeterminates of the polynomial rings associated to $T'$ and $S_m$ as follows.
Assign the following multidegrees to the variables of $\mathbb{C}[q_{\star}, \mathbf{q}]$
\begin{align*}
\deg(q_{\star}) &= [0,0,1], &  \deg(q_{i,j}) &= \begin{cases}
[1,0,0] & \text{ if } i, j \neq \ell, \\
[0,1,0] & \text{ if } i = \ell \text{ or } j = \ell.
\end{cases}
\end{align*}
Similarly, let
\begin{align*}
\deg(r_{\star}) &= [1,0,0], & \deg(r_{i,j}) &= \begin{cases}
[0,0,1] & \text{ if } i, j \neq h, \\
[0,1,0] & \text{ if } i = \ell \text{ or } j = h.
\end{cases} 
\end{align*}
Finally, let
\[
\deg(p_{i,j}) = \begin{cases}
[1,0,0] & \text{ if } i,j \in L(T'), \\
[0,0,1] & \text{ if } i,j \in L(S_m), \\
[0,1,0] & \text{ otherwise.}
\end{cases}
\]
Then the matrix $\Acal$ whose rows are these multigrading vectors is the $3 \times 3$ identity matrix and hence has full rank. 

\begin{prop}\label{prop:multihom}
The ideals $I(B_{T'}^*)$ and $I(B_{S_m}^*)$ are multi-homogeneous with respect to the given multigradings.
\end{prop}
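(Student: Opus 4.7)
The plan is to reduce multi-homogeneity of a toric ideal $I(A)$ to an inclusion of rowspans. Let $D$ denote the matrix whose columns record the multidegree of each variable in the ambient polynomial ring. Since $I(A)$ is generated by binomials $\mathbf{p}^\alpha - \mathbf{p}^\beta$ with $\alpha - \beta \in \ker A$, and each such binomial is multi-homogeneous precisely when $D(\alpha - \beta) = 0$, the ideal $I(A)$ is multi-homogeneous with respect to the grading if and only if $\ker A \subseteq \ker D$, equivalently $\rowspan(D) \subseteq \rowspan(A)$.

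With this reduction in place, the task becomes to exhibit each of the three grading-row vectors for $B_{T'}^{\star}$ (respectively $B_{S_m}^{\star}$) as an explicit integer combination of the rows of that matrix. After fixing the distinguished leaf $\delta$ of the tree — namely $\delta = \ell$ for $T'$ and $\delta = h$ for $S_m$ — inspection of the defining formulas shows that the three grading rows take a uniform shape in both cases: (A) the indicator of the $\star$ column, (B) the indicator of pairs $\{i,j\}$ containing $\delta$, and (C) the indicator of pairs $\{i,j\}$ not containing $\delta$.

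Each of these three vectors lies visibly in the rowspan of $B^{\star}$. Vector (A) is the modified $\star$ row guaranteed by \Cref{rem:MLdegreTFP}. For (B), the distinguished leaf $\delta$ is incident to a unique edge of the tree, and that edge lies on the path between leaves $i$ and $j$ if and only if exactly one of $i,j$ equals $\delta$; hence the corresponding edge row of $B^{\star}$ (which is zero in the $\star$ column) is precisely (B). Finally, (C) is the original all-ones row $\ol b^{\star}$ of $B^{\star}$ minus (A) minus (B), so it too lies in the rowspan. Combining these observations shows that both $I(B_{T'}^{\star})$ and $I(B_{S_m}^{\star})$ are multi-homogeneous with respect to the stated gradings.

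The only mild subtlety — scarcely an obstacle — is confirming that in both trees the distinguished vertex is a leaf, so that the combinatorial description of the edge row used for (B) is available. This holds directly by construction: $\ell$ is the distinguished leaf of $T'$ and $h$ is the root leaf of $S_m$, so in each case the edge incident to $\delta$ participates only in paths with one endpoint at $\delta$.
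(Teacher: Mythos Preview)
Your argument is correct and takes a genuinely different route from the paper. The paper invokes the explicit quadratic generators $p_{ac}p_{bd}-p_{ad}p_{bc}$ of \cref{thm:SUZmain} and checks by hand that each is multi-homogeneous (since at most one of $a,b,c,d$ can be the distinguished leaf). Your proof instead uses the general principle that a toric ideal $I(A)$ is multi-homogeneous for a grading matrix $D$ if and only if $\rowspan(D)\subseteq\rowspan(A)$, and then verifies this inclusion combinatorially: the $\star$-indicator row comes from \cref{rem:MLdegreTFP}, the ``pairs containing $\delta$'' row is exactly the edge row $b^{e(\delta)}$ since $\delta$ is a leaf in both $T'$ and $S_m$, and the remaining row is the all-ones row minus the previous two. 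The paper's approach is shorter because it piggybacks on the known generating set, while yours is more self-contained---it never appeals to the SUZ generators and would apply to any toric ideal once the rowspan inclusion is checked---and it makes transparent \emph{why} the particular multigrading was chosen, namely that its rows are already visible among the rows of $B^{\star}$.
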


\begin{proof}
The generators of $I(B^*_{T'})$ are identical to those of $I(B_{T'})$ and each generator of $I(B_{T'})$ has the form $p_{ac}p_{bd} - p_{ad}p_{bc}$, as described in \cref{thm:SUZmain}. At most one of $a,b,c,d$ may be equal to $\ell$. If none are equal to $\ell$, then the multidegree of each monomial is $[2,0,0]$. If exactly one is equal to $\ell$, then the multidegree of each monomial is $[1,1,0]$. Since each generator of $I(B^*_{T'})$ is multi-homogeneous with respect to the given multigrading, the ideal itself is also multi-homogeneous. The argument for $I(B^*_{S_m})$ is analogous.
\end{proof}

Proposition~\ref{prop:multihom} allows us to define the toric fiber product of the ideals $I(B_{T'}^*)$ and $I(B_{S_m}^*)$. Let $R_{T'} = \CC[q_{\star}, \mathbf{q}] / I(B_{T'}^{\star})$ and let $R_{S_m} = \CC[r_{\star}, \mathbf{r}] / I(B_{S_m}^{\star})$. 
With respect to these multigradings, the {toric fiber product} of $I(B_{T'}^{\star})$ and $I(B_{S_m}^{\star})$, denoted as $I(B_{T'}^{\star})\times_{\Acal}I(B_{S_m}^{\star})$ is the kernel of the map,
\begin{alignat*}{1}
\psi_{T',S_m} \colon & \CC[\bfp] \rightarrow R_{T'} \otimes_{\CC} R_{S_m} \\
&\begin{cases}
p_{i,j}  \mapsto  q_{i,j} \otimes r_{\star} & \text{ if } i,j \in Lv(T')\setminus \{\ell\}, \\
p_{i,j}  \mapsto  q_{\star} \otimes r_{i,j} & \text{ if } i,j \in Lv(S_m)\setminus \{ h \}, \text{ and} \\
p_{i,j} \mapsto q_{i,\ell} \otimes r_{h,j}, & \text{ if } i \in Lv(T') \setminus \{\ell\} \text{ and } j \in Lv(S_m) \setminus \{ h \}.
\end{cases}
\end{alignat*}

\begin{rem}
Combinatorially, this operation corresponds to including paths between leaves of the smaller trees $T'$ and $S_m$ into $T$. Paths whose leaves are both in $T'$ or $S_m$ remain the same, whereas we glue together paths in $T'$ and $S_m$ with endpoints $\ell$ and $h$ respectively along their common edge.
\end{rem}

\begin{thm}\label{thm:toricfibertree}
With the notation as above, we have $I(B_T) = I(B_{T'}^{\star}) \times_{\Acal} I(B_{S_m}^{\star}).$
\end{thm}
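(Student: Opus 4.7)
The plan is to reduce both sides to the same toric ideal defined by a common design matrix. By definition, $I(B_{T'}^\star) \times_\Acal I(B_{S_m}^\star) = \ker \psi_{T', S_m}$, and since the monomial parametrizations $\phi_{B_{T'}^\star}$ and $\phi_{B_{S_m}^\star}$ are injective on the coordinate rings $R_{T'}$ and $R_{S_m}$, this kernel equals the kernel of the composed monomial map
\[
\Phi \;:=\; (\phi_{B_{T'}^\star} \otimes \phi_{B_{S_m}^\star}) \circ \psi_{T', S_m} \;\colon\; \CC[\bfp] \longrightarrow \CC[X_e^{T'}, X_{e'}^{S_m}, X_\star^{T'}, X_\star^{S_m}].
\]
Writing $M$ for the exponent matrix of $\Phi$, we have $\ker \psi_{T',S_m} = I(M)$. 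Since two non-negative integer matrices cut out the same toric ideal if and only if they have the same $\mathbb{Q}$-row span, the goal reduces to proving $\rowspan(M) = \rowspan(B_T)$.

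I would first write $M$ explicitly, using the simplified parametrizations $q_\star \mapsto X_\star^{T'}$ and $r_\star \mapsto X_\star^{S_m}$ permitted by Remark~\ref{rem:MLdegreTFP}. The columns of $M$ fall into three types according to whether $(i,j)$ is a pure-$T'$, pure-$S_m$, or mixed pair. A direct case check then shows that for each edge $e \in E(T') \setminus \{e_{h\ell}\}$ the row of $M$ labeled $e$ equals the corresponding row of $B_T$ (and likewise for $E(S_m) \setminus \{e_{h\ell}\}$), while the two rows of $M$ indexed by $e_{h\ell}$---one arising from $T'$ and one from $S_m$---both coincide with the row of $B_T$ labeled by the glued edge $e^\dagger$, namely the indicator vector of the mixed columns. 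Consequently every row of $B_T$ appears among the rows of $M$, and it remains to place the two additional rows $\star^{T'}$ (the indicator of pure-$S_m$ columns) and $\star^{S_m}$ (the indicator of pure-$T'$ columns) into $\rowspan(B_T) = \rowspan(A_T)$, where the last equality is Proposition~\ref{prop:RowspansEqual}.

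For $\star^{T'}$: because $S_m$ is a star with unique internal vertex $\ell$, the lca of any two non-root leaves of $S_m$ equals $\ell$, so $\star^{T'}$ is exactly the $A_T$-row $a_T^\ell$. For $\star^{S_m}$ I would use the partition identity
\[
\star^{S_m} \;=\; \mathbf{1} \;-\; a_T^\ell \;-\; b_T^{e(\ell)},
\]
valid because $a_T^\ell$ and $b_T^{e(\ell)}$ are the indicators of pure-$S_m$ and mixed columns, respectively. This reduces the task to verifying that the all-ones vector $\mathbf{1}$ lies in $\rowspan(A_T)$, which follows column-by-column from the identity
\[
\sum_{v \in \Lv(T) \setminus \{0\}} a_T^v \;-\; \sum_{v \in \In(T)} a_T^v \;=\; \mathbf{1},
\]
using the observation that the lca of any two non-root leaves is an internal non-root vertex (a consequence of the no-degree-two hypothesis, which ensures every internal vertex has out-degree at least two and hence is the lca of some leaf pair).

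The main obstacle is the bookkeeping around $M$: writing down its entries correctly for all three column types, recognizing that the two rows indexed by $e_{h\ell}$ collapse to the single $B_T$-row $e^\dagger$, and isolating the two $\star$-rows as the only rows requiring further justification. Once these identifications are in hand, the remainder is a direct verification and yields $\rowspan(M) = \rowspan(B_T)$, hence $I(B_T) = I(M) = \ker \psi_{T', S_m}$.
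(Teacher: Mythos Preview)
Your argument is correct and follows essentially the same route as the paper: both compose $\psi_{T',S_m}$ with the monomial parametrizations of $R_{T'}$ and $R_{S_m}$ and then verify that the resulting exponent matrix has the same rowspan as $B_T$. The only difference is bookkeeping---the paper uses the all-ones version of the $\star$-row, identifies the two $\star$ and the two $e_{h\ell}$ parameters, and disposes of them via the ``always squared'' observation, whereas you keep the parameters separate and place the two extra $\star$-rows into $\rowspan(A_T)$ directly; both executions are valid.
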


\begin{proof}
%Both $I(B^*_{T'})$ and $I(B^*_{S_m})$ are multi-homogeneous with respect to their multigradings. Indeed, the generators of $I(B^*_{T'})$ are identical to those of $I(B_{T'})$ and each generator of $I(B_{T'})$ has the form $p_{ac}p_{bd} - p_{ad}p_{bc}$, as described in \cref{thm:SUZmain}. At most one of $a,b,c,d$ may be equal to $\ell$. If none are equal to $\ell$, then the multidegree of each monomial is $[2,0,0]$. If exactly one is equal to $\ell$, then the multidegree of each monomial is $[1,1,0]$. Since each generator of $I(B^*_{T'})$ is multi-homogeneous with respect to the given multigrading, the ideal itself is also multi-homogeneous. The argument for $I(B^*_{S_m})$ is analogous.

We may rewrite the map defining the toric fiber product as
\begin{alignat*}{3}
\psi_{T',S_m} \colon \quad &\CC[\bfp] & \rightarrow & \CC[t_{\star}, t_e \mid e \in E(T)] \\
&p_{i,j} &\mapsto & t_{\star} \big( \prod_{e \in \Pcal(i,j) \cap E(T')} t_e \big) t_{\star} \big( \prod_{e \in \Pcal(i,j) \cap E(S_m)} t_e \big).
\end{alignat*}
Note that $t_{\star}$ and $t_{e(\ell)}$ are always squared in the image of this map. Indeed, $t_{\star}^2$ is a factor of each $p_{ij}$. The parameter $t_{e(\ell)}$ does not appear as a factor of $p_{ij}$ when the path $\Pcal(i,j)$ lies entirely within $T'$ or $S_m$. When $i$ is a leaf of $T'$ and $j$ is a leaf of $S_m$ (or vice versa), $t_{e(\ell)}^2$ divides $p_{ij}$. So we may replace the parameters $t_{\star}$ and $t_{e(\ell)}$ with their square roots without changing the kernel of $\psi_{T',S_m}$. After this replacement, the row corresponding to $t_*$ in the matrix defining $\psi_{T',S_m}$ is the row of all ones. Since the row of all ones is in $\rowspan(B_T)$, the kernel of $\psi_{T', S_m}$ is equal to the kernel of the map $\phi_{B_T}$ associated to $B_T$ as in \Cref{eqn:monomialmap}.
\end{proof}

\begin{cor}
\label{cor:MLdegreToricFiberProduct}
The ML-degree of   $I(B_T)$ is equal to the product of the ML-degrees for $I({B_{T'}})$ and $ I({B_{S_m}})$. 
\end{cor}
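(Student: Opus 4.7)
The plan is to combine Theorem~\ref{thm:toricfibertree} with the multiplicativity of the ML-degree under toric fiber products established in~\cite{AKK19}. By Theorem~\ref{thm:toricfibertree} we have
\[
I(B_T) = I(B_{T'}^{\star}) \times_{\Acal} I(B_{S_m}^{\star}),
\]
and by Proposition~\ref{prop:multihom} the two factor ideals are multi-homogeneous with respect to the multigrading encoded by $\Acal$. By construction $\Acal$ is the $3 \times 3$ identity matrix, so it has full rank; thus the structural hypotheses for forming the toric fiber product are in place.

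The main step is then to cite the theorem of~\cite{AKK19}, which states that under precisely these conditions the ML-degree of the toric fiber product equals the product of the ML-degrees of its two constituent toric ideals. Applied to the displayed identity above, this yields
\[
\mld(I(B_T)) = \mld(I(B_{T'}^{\star})) \cdot \mld(I(B_{S_m}^{\star})).
\]
To finish, I would invoke Remark~\ref{rem:MLdegreTFP}, which shows that augmenting a design matrix by a zero column and an all-ones row leaves the associated ideal and its ML-degree unchanged; hence $\mld(I(B_{T'}^{\star})) = \mld(I(B_{T'}))$ and $\mld(I(B_{S_m}^{\star})) = \mld(I(B_{S_m}))$. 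Substituting these equalities into the previous display gives the desired identity.

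The only genuine obstacle is a bookkeeping one: matching our setup to the precise hypotheses of the multiplicativity theorem in~\cite{AKK19}, in particular verifying that the multigrading matrix $\Acal$ is of full rank and that both factor ideals are multi-homogeneous with respect to it. Both points have already been arranged in the preceding subsection---the first by direct inspection of the assignment of multidegrees, the second by Proposition~\ref{prop:multihom}---so no further computation is required beyond citing the external result and applying Remark~\ref{rem:MLdegreTFP}.
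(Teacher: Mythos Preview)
Your proposal is correct and follows essentially the same approach as the paper: invoke Theorem~\ref{thm:toricfibertree}, note that $\Acal$ is the $3\times 3$ identity (hence full rank) so that \cite[Theorem~5.5]{AKK19} applies to give multiplicativity of the ML-degree, and then use Remark~\ref{rem:MLdegreTFP} to pass from the starred ideals back to $I(B_{T'})$ and $I(B_{S_m})$. The paper's proof is slightly terser (it does not re-cite Proposition~\ref{prop:multihom}), but the logic is identical.
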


\begin{proof}
The matrix $\mathcal{A}$ is the $3\times 3$ identity matrix, and hence has full rank. Thus, from \cite[Theorem~5.5]{AKK19}, the ML-degree of the toric fiber product of two toric models is the product of the ML-degrees of the models.  Thus, \Cref{thm:toricfibertree} implies that the ML-degree of  $I(B_T)$ is equal to the product of the ML-degrees of $I(B_{T'}^{\star})$ and $ I(B_{S_m}^{\star})$. This is equal to the product of the ML-degrees of $I({B_{T'}})$ and $ I({B_{S_m}})$ by Remark \ref{rem:MLdegreTFP}.  
\end{proof}

\section{Reciprocal ML-degree of star tree models}\label{s:star_tree}

A \emph{star tree} $S_n$ is a tree on leaves $\{0,\ldots, n\}$ with a unique internal vertex.  We compute the reciprocal ML-degree of star tree models in the following theorem. This serves as the basis of induction in the proof of the main theorem.

\begin{thm}
\label{thm:dualMLstarT}
The reciprocal maximum likelihood degree of the Brownian motion star tree model on $n+1$ leaves is equal to $2^n-n-1$.
\end{thm}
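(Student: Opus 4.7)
The plan is to apply Theorem~\ref{Thm:DegreesEqual} and Proposition~\ref{prop:toricMLD} to reduce $\rmld(\Mcal_{S_n})$ to counting, with multiplicity, the intersection points of $V(I_{S_n})$ with the generic affine subspace $\{A_{S_n}\bfp = A_{S_n}\bfu\}$. I would parametrize $V(I_{S_n})$ birationally by $(t_1,\dots,t_n,c)\in\CC^{n+1}$ via $p_{0i}=t_i$ and $p_{ij}=c t_i t_j$, so that the linear conditions pull back to
\[
t_i(1 + c(T-t_i)) = U_i\ (i=1,\dots,n), \qquad c(T^2 - Q)/2 = U_v,
\]
where $T=\sum_i t_i$, $Q=\sum_i t_i^2$, and $U_i, U_v$ are the linear combinations of $\bfu$ dictated by $A_{S_n}$. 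Summing the leaf equations and subtracting twice the $v$-equation forces $T = \tau := \sum_i U_i - 2U_v$; conversely, under $T=\tau$ the $v$-equation is a consequence of the others. So the system reduces to $\sum_i t_i = \tau$ together with the $n$ decoupled quadratics $c t_i^2 - (1+c\tau)t_i + U_i = 0$.

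For each sign vector $\epsilon \in \{\pm 1\}^n$, solving the $i$th quadratic gives $t_i = ((1+c\tau) + \epsilon_i D_i(c))/(2c)$ with $D_i(c) = \sqrt{(1+c\tau)^2 - 4cU_i}$, and substituting into $\sum_i t_i = \tau$ yields
\[
R_\epsilon(c) := \sum_{i=1}^n \epsilon_i D_i(c) + (n-2)c\tau + n = 0.
\]
For generic $\bfu$, the assignment $(c, \epsilon) \mapsto \bfp$ is a bijection between pairs with $c \neq 0$ and $R_\epsilon(c)=0$ and the rmld solutions. I would count those pairs using the Galois norm $P(c) := \prod_{\epsilon \in \{\pm 1\}^n} R_\epsilon(c)$, which is invariant under every $D_i \mapsto -D_i$ and (using $D_i^2 \in \CC[c]$) lies in $\CC[c]$; its nonzero roots, counted with multiplicity, enumerate the valid pairs.

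The central calculation is $\deg_c P = 2^n - n$. Expanding $D_i(c) = c\tau + 1 - 2U_i/\tau + O(1/c)$ at $c = \infty$ gives $R_\epsilon(c) = 2\tau(|S_+|-1)\,c + O(1)$, where $|S_+| := \#\{i : \epsilon_i = +1\}$; so $R_\epsilon$ has asymptotic $c$-degree $1$ unless $|S_+|=1$, and for the $n$ patterns with $|S_+|=1$ a further expansion shows $R_\epsilon(c) \to 4(\tau + U_v - U_j)/\tau$, nonzero for generic $\bfu$ (where $j$ is the unique $+$ index). Multiplying asymptotics over the $2^n$ factors yields $\deg_c P = (2^n - n)\cdot 1 + n \cdot 0 = 2^n - n$.

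Finally, $c = 0$ is a spurious simple root of $P$: $R_\epsilon(0) = 2|S_+|$, so only $R_{(-,\dots,-)}$ vanishes at $0$, and its derivative there is $-\sum_i(\tau - 2U_i) + (n-2)\tau = 4U_v \neq 0$ generically. It is spurious because $c = 0$ forces $p_{ij} = 0$ for all $i, j \geq 1$, making the $v$-equation $0 = U_v$, impossible for generic $\bfu$. Subtracting this unique spurious root gives $\rmld(\Mcal_{S_n}) = \deg_c P - 1 = 2^n - n - 1$. The main technical obstacle is the asymptotic step: one must control the $O(1)$ term in the expansion of $R_\epsilon$ precisely enough to rule out any cancellations beyond those captured by the $|S_+| = 1$ condition, which a direct Taylor expansion confirms for generic $\bfu$.
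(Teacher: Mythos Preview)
Your argument is correct and follows a genuinely different route from the paper's. The paper's proof is non-computational: it observes that $I_n$ is the toric ideal of the second hypersimplex, whose projective degree is known to be $2^n-n-1$ (cited from \cite{DST1995}), and then proves a linear-algebra lemma showing that $\Lcal_{S_n}^{\perp}\cap\Lcal_{S_n}^{-1}=\{0\}$, which guarantees that the projective intersection $V(\overline{I_n})\cap V(\langle A_T\bfp-p_\star A_T\bfu\rangle)$ has no points on the hyperplane at infinity; hence the affine count equals the projective degree. Your approach instead birationally parametrizes $V(I_{S_n})$ by $(t_1,\dots,t_n,c)$, pulls the affine constraints back to a one-parameter family of quadratics, and counts solutions via the norm polynomial $P(c)=\prod_\epsilon R_\epsilon(c)$, extracting $\deg_c P=2^n-n$ from the asymptotics at $c=\infty$ and removing the spurious simple root at $c=0$.

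What each buys: the paper's route is shorter and conceptually cleaner, leaning on the classical degree formula for the second hypersimplex and isolating the ``no points at infinity'' statement as a reusable lemma. Your route is self-contained --- it effectively \emph{rederives} the hypersimplex degree rather than citing it --- and makes the combinatorics behind the formula $2^n-n-1$ visible (the $n$ ``missing'' degrees come precisely from the sign patterns with $|S_+|=1$, and the extra $-1$ from the degeneration at $c=0$). Two points you should tighten when writing it up: (i) for generic $\bfu$ every solution $\bfp$ lies in the open torus where all $p_{ij}\neq 0$ (so your parametrization captures all solutions and $c\neq 0$); this is exactly Lemma~\ref{lem:BadLocus}; and (ii) the identification of $\deg_c P-1$ with the solution count, including multiplicities, is cleanest once you note that for generic $\bfu$ the intersection is transverse on the torus, so all multiplicities are $1$ and each nonzero root of $P$ corresponds to a unique $(\epsilon,c)$.
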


In preparation of the proof,
let $I_n$ be the defining ideal of the toric variety $\Lcal_{S_n}^{-1}$ in the $\bfp$ coordinates as given in \Cref{eq:changeOfCoordinates}.  By Proposition \ref{prop:RowspansEqual}, the ideal $I_n$ is equal to the ideal $I(B_{S_n})$, where the matrix $B_{S_n} \in \ZZ^{(n+1) \times \binom{n+1}{2}}$ as defined in \Cref{s:toricfiber} has columns $\{\mathbf{e}_i + \mathbf{e}_j \in \ZZ^{n+1}\mid 0 \leq i < j \leq n\}$.  In other words, the ideal $I_n$ is the toric ideal of the second hypersimplex, for which the following facts are well-known.

\begin{thm}\label{thm:hypersimplex}
The following hold for the toric ideal $I_n$.
\begin{enumerate}[label=(\alph*)]
\item \label{thm:generators}\cite[Theorem 2.1]{DST1995} The ideal $I_n \subset \CC[\bfp]$ is generated by the quadrics 
\[
\label{eq:GrobnerBasis}
    p_{ij}p_{kl}-p_{ik}p_{jl}, \text{ for distinct  }  i,j,k,l \in \{0,1\dots,n\}.
\]
\item \label{thm:degreeOfIdeal} \cite[Theorem 2.3]{DST1995} The degree of $V(I_n)$, as a projective variety in $\mathbb P^{\binom{n+1}{2}-1}$, is equal to $2^{n}-n-1$.
\end{enumerate}
\end{thm}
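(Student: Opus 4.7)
The plan is to treat both parts through the polytope $\Delta := \mathrm{conv}\{\mathbf{e}_i + \mathbf{e}_j \mid 0 \le i < j \le n\} \subset \mathbb{R}^{n+1}$, the second hypersimplex on $n+1$ letters, whose lattice points form exactly the columns of $B_{S_n}$. Then $I_n$ is the toric ideal of this point configuration and $V(I_n) \subset \mathbb{P}^{\binom{n+1}{2}-1}$ is its associated projective toric variety.

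For part (a), I would first observe that every claimed generator $p_{ij}p_{kl} - p_{ik}p_{jl}$ lies in $I_n$, since both monomials evaluate to $t_i t_j t_k t_l$ under the parametrization $p_{ab} \mapsto t_a t_b$. The content lies in the converse. I would pass through the edge-multigraph dictionary: a monomial $\mathbf{p}^{\mathbf u}$ corresponds to a multigraph on $\{0,\dots,n\}$ with $u_{ij}$ copies of the edge $\{i,j\}$, and $\mathbf{p}^{\mathbf u} - \mathbf{p}^{\mathbf v} \in I_n$ if and only if the two multigraphs share a degree sequence. The classical fact that any two such multigraphs are connected by a sequence of \emph{$2$-switches}, which replace a pair $\{i,j\},\{k,l\}$ by $\{i,k\},\{j,l\}$, then translates directly into reductions by the listed quadrics. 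An equivalent and more structural approach is to exhibit these quadrics as a reduced Gröbner basis for a reverse-lexicographic term order induced by a regular unimodular triangulation of $\Delta$, invoking Sturmfels' correspondence between such triangulations and squarefree quadratic initial ideals.

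For part (b), I would invoke the standard identification of the degree of a projective toric variety with the normalized volume of its defining polytope. Since $\Delta$ lies in the affine hyperplane $\sum_i x_i = 2$, it is an $n$-dimensional lattice polytope, and
\[
    \deg V(I_n) = n! \cdot \mathrm{vol}(\Delta).
\]
The normalized volume of the second hypersimplex is the Eulerian number $A(n,1)$ counting permutations of $\{1,\dots,n\}$ with exactly one descent, and by the explicit Eulerian formula this equals $\sum_{j=0}^{1}(-1)^j \binom{n+1}{j}(2-j)^n = 2^n - n - 1$. A clean combinatorial route uses Stanley's unimodular triangulation of the hypersimplex into simplices indexed by descent-$1$ permutations; this triangulation is regular and flag, so the resulting squarefree quadratic initial ideal also implies part (a), unifying both statements.

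The main obstacle is executing the combinatorics of part (a) with care: the $2$-switch connectivity for multigraphs of a prescribed degree sequence must be handled with attention to edge multiplicities, and the switch moves must be matched bijectively with the listed binomial generators so that the reductions terminate. Part (b) is then comparatively clean given the hypersimplex-Eulerian volume identification, and the Stanley triangulation route in fact settles both parts at once.
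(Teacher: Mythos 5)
The paper does not prove this statement at all: both parts are quoted verbatim from De Loera--Sturmfels--Thomas (the cited Theorems 2.1 and 2.3 of \cite{DST1995}), so there is no internal proof to compare against. Your outline is a correct reconstruction of how those results are actually established, and it follows essentially the same route as the cited source: $I_n$ is the toric ideal of the second hypersimplex (equivalently, of the edge ring of $K_{n+1}$), part (a) amounts to $2$-switch connectivity of loopless multigraphs with a fixed degree sequence (or, more structurally, to a regular unimodular flag triangulation producing a squarefree quadratic initial ideal), and part (b) is the identification of the degree with the normalized volume, which by Stanley's theorem is the Eulerian number $A(n,1)=2^n-n-1$; your computation $\binom{n+1}{0}2^n-\binom{n+1}{1}=2^n-n-1$ checks out. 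Two points deserve the care you already flag: first, the switch moves must use four \emph{distinct} indices (no loop variables $p_{ii}$ exist), and one should note that the only loopless multigraphs realizing the degree sequence of a disjoint pair of edges are the three perfect matchings on those four vertices, which is what makes a quadratic Gr\"obner basis automatically a subset of the listed binomials; second, the equality of degree and normalized volume is taken with respect to the lattice affinely generated by the columns $\mathbf e_i+\mathbf e_j$, which here is the full lattice of the affine span since the differences generate the $A_n$ root lattice. With those details supplied, your argument is complete and would serve as a self-contained proof of the cited facts.
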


Along with the above \Cref{thm:hypersimplex}, the following will be a key step in the proof of \Cref{thm:dualMLstarT}.

\begin{lemma}\label{lem:zerointersection}
The varieties $\cL_{S_n}^{\perp}$ and $\cLI_{S_n}$ in $\mathbb{S}^n$ intersect only at the zero matrix.
\end{lemma}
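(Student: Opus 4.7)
The plan is to translate the intersection condition into the $\bfp$-coordinates of \eqref{eq:changeOfCoordinates} and then run a case analysis on the support of $\bfp$, using the binomial structure of the second hypersimplex ideal $I_n$. By \Cref{thm:SUZmain}, a matrix $K \in \mathbb{S}^n$ lies in $\Lcal_{S_n}^{-1}$ if and only if $\bfp := \bfp(K)$ lies in $V(I_n)$. A direct computation shows that $K \in \Lcal_{S_n}^{\perp}$ --- that is, $k_{ii} = 0$ for all $i = 1, \ldots, n$ together with $\sum_{1 \leq i < j \leq n} k_{ij} = 0$ --- is equivalent to $P\mathbf{1} = 0$, where $P$ is the symmetric $(n+1) \times (n+1)$ matrix with zero diagonal and $P_{ij} = p_{ij}$ for $i \neq j$, and $\mathbf{1}$ is the all-ones vector. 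It therefore suffices to show $V(I_n) \cap \{\bfp : P\mathbf{1} = 0\} = \{0\}$.

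Given such a $\bfp$, let $G$ denote its support $\{\{i,j\} : p_{ij} \neq 0\}$, viewed as a graph on $\{0,\ldots,n\}$, and let $V(G)$ denote the set of vertices incident to edges of $G$. The key structural claim is that if $G$ contains two disjoint edges $\{i,j\}$ and $\{k,l\}$, then the binomials $p_{ij}p_{kl} - p_{ik}p_{jl}$ and $p_{ij}p_{kl} - p_{il}p_{jk}$ of $I_n$ force all six pairs in $\{i,j,k,l\}$ into $G$; iterating yields that $G$ is the complete graph on $V(G)$. Hence $G$ is empty, a star, a triangle, or a complete graph on $m := |V(G)| \geq 4$ vertices. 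In the star case with center $v$, any leaf $a$ of $G$ satisfies $(P\mathbf{1})_a = p_{av} \neq 0$, a contradiction. In the remaining two cases the binomials provide a parametrization $p_{ij} = x_i x_j$ for $i,j \in V(G)$ with every $x_i \neq 0$: directly from the three relations in the triangle case, and by fixing three reference indices and propagating through the binomials when $G$ is complete on $\geq 4$ vertices.

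Under this parametrization $(P\mathbf{1})_i = x_i(S - x_i)$ with $S = \sum_{i=0}^n x_i$, so $P\mathbf{1} = 0$ forces $x_i = S$ for every $i \in V(G)$. Summing these identities yields $S = m \cdot S$, hence $(m-1)S = 0$. Since $m \geq 3$ in both remaining cases, this gives $S = 0$ and then $x_i = 0$ for every $i \in V(G)$, contradicting $x_i \neq 0$. The only surviving possibility is $G = \emptyset$, i.e., $\bfp = 0$ and therefore $K = 0$. I expect the main technical burden to be the structural claim that $G$ is complete on $V(G)$ whenever $G$ contains two disjoint edges, and the accompanying verification of the toric parametrization in the triangle and complete-graph cases; both follow from a careful iterative application of the binomial relations of $I_n$.
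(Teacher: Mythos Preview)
Your argument is correct and reaches the same conclusion as the paper, but by a genuinely different route. Both proofs translate to the $\bfp$-coordinates and study a graph encoding the support of $\bfp$, yet the mechanics diverge from there. The paper works with an $n\times n$ matrix $P$ having $p_{0i}$ on the diagonal, builds for each pair $i,j$ a $2\times(n-1)$ matrix $Q_{ij}$ whose $2\times 2$ minors lie in $I_n$, and uses the row-sum conditions to show that $p_{ij}\neq 0$ forces rows $i$ and $j$ to agree; this yields a block-diagonal form whose blocks are then killed using the trace condition. You instead use the more symmetric $(n{+}1)\times(n{+}1)$ zero-diagonal matrix, classify the support graph as empty, a star, a triangle, or complete on $V(G)$, and then invoke the rank-one parametrization $p_{ij}=x_ix_j$ on $V(G)$; the equation $x_i(S-x_i)=0$ does the rest. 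Your route makes the underlying toric geometry (the monomial parametrization of the second hypersimplex) more visible, while the paper's $Q_{ij}$ computation is more self-contained and avoids separately establishing the parametrization.

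Two minor expository points worth tightening. First, the phrase ``directly from the three relations in the triangle case'' is misleading: $I_n$ has no binomials supported on only three indices, so in the triangle case the parametrization $p_{ab}=x_ax_b$, $p_{ac}=x_ax_c$, $p_{bc}=x_bx_c$ is simply the elementary fact that any three nonzero complex numbers admit such a factorization (solve $(x_ax_bx_c)^2 = p_{ab}p_{ac}p_{bc}$). Second, your $S=\sum_{i=0}^n x_i$ should read $S=\sum_{i\in V(G)} x_i$, since the $x_i$ are only defined on $V(G)$; the subsequent count $S=mS$ is then exactly right.
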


\begin{proof}
Let $K\in \mathbb S^n$ be in the intersection $\cL_{S_n}^{\perp} \cap \cLI_{S_n}$, and write $(p_{ij})_{0\leq i < j \leq n}$ for the resulting coordinates after the change of coordinates in \Cref{eq:changeOfCoordinates}.  Let $P$ be an $n\times n$ symmetric matrix with diagonal entries $p_{01}, \ldots, p_{0n}$ and the off-diagonal entries $p_{ij}$ for $1\leq i < j \leq n$.  

The equations for $K\in \Lcal_{S_n}^\perp$ in terms of coordinates in $P$, as previously computed in \Cref{Eqn:BrownianLinearSystem}, are equivalent to
\[
   p_{01}+\dots +p_{0n}=0
   \quad \text{and}\quad \sum\limits_{\substack{i=0\\ i \neq j}}^np_{ij}=0, \text{ for }  j=1,\dots,n.
\]
In other words, the trace of $P$ and every row sum of $P$ are zero.

The condition $K\in \Lcal_{S_n}^{-1}$ is equivalent to $P \in V(I_n)$, again by \Cref{thm:SUZmain}.
The explicit set of generators for $I_n$ given in \Cref{thm:hypersimplex} impose the following condition on the entries of $P$:
%The explicit set of generators for $I_n$ given in \Cref{thm:hypersimplex}.\ref{thm:generators} can be described in the following way:
For $1\leq i<j\leq n$, define $Q_{ij}$ to be the $2\times (n-1)$ matrix obtained by
\begin{enumerate}[label=(\roman*)]
    \item taking the  i-th and j-th row of $P$ to make a $2\times n$ matrix,
    \item  then converting the square submatrix $\begin{bmatrix} p_{0i} & p_{ij}\\
    p_{ij} & p_{0j}\end{bmatrix}$ to  $\begin{bmatrix} p_{0i} & p_{ij}\\
    p_{0j} & p_{ij}\end{bmatrix}$,
    \item and then erasing the column  $\begin{bmatrix} p_{ij} \\
    p_{ij}\end{bmatrix}$. 
\end{enumerate}
For all $1\leq i<j\leq n$, the $2\times 2$ minors of $Q_{ij}$ belong to the set of generators for $I_n$ in \Cref{thm:hypersimplex}.
%The generators for $I_n$ in \Cref{thm:hypersimplex} are the $2\times 2$ minors of $Q_{ij}$ as $ij$ ranges over for all $1\leq i<j\leq n$.
Since the row sums of $P$ must be zero, we have that both row sums of $Q_{ij}$ are equal to $-p_{ij}$. Thus, that the rank of $Q_{ij}$ is at most 1 implies that if $p_{ij} \neq 0$, then $p_{il}=p_{jl}$ for all $l=1,\dots,n$.
As a result, if we consider the graph $G$ on vertices $\{1,\dots,n\}$ where $(i,j)$ is an edge in $G$ if and only if $p_{ij}\neq 0$, we have:
\begin{enumerate}
    \item Connected components of $G$ are complete graphs, and 
    \item for any $i\neq j$ belonging to a common connected component of $G$, all the $p_{ij}$ share a common value. 
\end{enumerate}
Thus, after relabeling, the matrix $P$ is a block diagonal matrix, each block having the form of a $(m+1)\times (m+1)$ matrix:
\[
\begin{bmatrix}
-ma & a &\dots & a\\
a & -ma &\dots & a\\
\vdots & \vdots &\ddots & \vdots\\
a & a &\dots & -ma
\end{bmatrix}.
\]
Suppose there are many blocks, say of sizes $m_1+1,\dots, m_{\ell}+1$.  Take $Q_{ij}$ with $i=m_a$ and $j=m_b$, for $1\leq a<b\leq \ell$. Then 
\[
Q_{ij}=\begin{bmatrix}
0 ~ \dots ~0 & a ~ \dots ~ a ~ m_a a& 0 ~ \dots ~ 0  & 0~ \dots ~ 0 \\
0 ~ \dots ~0 & 0 ~ \dots ~ 0 ~ m_b b& b ~ \dots ~ b  & 0~ \dots ~ 0 
\end{bmatrix}.
\]
For $Q_{ij}$ to have all vanishing $2 \times 2$ minors, at least one of $a$ and $b$ need be zero.  Hence, there can be at most one block with non-zero entries. If there is only one block, then $\trace(P)=0$ implies that $a=0$ and that $P$ is the zero matrix.  We thus conclude that $P$ is the zero matrix. 
\end{proof}

\begin{proof}[Proof of \cref{thm:dualMLstarT}]
For $T = S_n$, \Cref{Thm:DegreesEqual} states that the reciprocal ML-degree of $\Mcal_{S_n}$ is equal to to degree of $V(I_n) \cap V(\langle A_T \bfp - A_T \bfu \rangle)$ as an affine subscheme of $\CC^{\binom{n+1}{2}}$ for a generic $\bfu$.  Let us consider the intersection of their respective projective closures.  That is, we homogenize the ideals $I_n$ and  $\langle A_T \bfp - A_T \bfu \rangle\subset \CC[p_{ij} \mid 0\leq i<j\leq n]$ by an extra variable $p_\star$.  As the ideal $I_n$ is already homogeneous, the resulting homogenization $\overline{I_n}$ is the extension of $I_n$ in $\CC[p_\star,p_{ij}\mid 0\leq i < j \leq n]$, and $\langle A_T \bfp - A_T \bfu \rangle$ homogenizes to $\langle A_T \bfp - p_\star A_T \bfu \rangle$.
As projective varieties in $\mathbb P^{\binom{n+1}{2}}$, the intersection of $V(\overline{I_n})$ with the linear subvariety $V(\langle A_T \bfp - p_\star A_T \bfu \rangle)$ is the degree of $V(\overline{I_n})$.  Since $V(\overline{I_n})$ is the projective cone over $V(I_n)$ considered as a projective variety in $\mathbb P^{\binom{n+1}{2}-1}$, we thus conclude from \Cref{thm:hypersimplex}.\ref{thm:degreeOfIdeal} that the degree of the intersection $V(\overline{I_n}) \cap V(\langle A_T \bfp - p_\star A_T \bfu \rangle)$ is $2^n-n-1$.

It remains only to show that the intersection $V(\overline{I_n}) \cap V(\langle A_T \bfp - p_\star A_T \bfu \rangle)$ has no point in the hyperplane at infinty $\{p_\star = 0\}$.  Recall from the proof of \Cref{Thm:DegreesEqual} that in the $\bfp$-coordinates, $\bfp \in \Lcal_T^{\perp}$ if and only if $A_T \bfp = 0$. Thus when $p_\star = 0$, the equations defining the intersection are exactly the ones defining intersection $\Lcal_{T}^{-1} \cap \Lcal_{T}^\perp$, which only consists of the zero matrix by Lemma~\ref{lem:zerointersection}.  Hence, the intersection $V(\overline{I_n}) \cap V(\langle A_T \bfp - p_\star A_T \bfu \rangle)$ is empty if $p_\star = 0$, as desired.
\end{proof}

We can now prove the main result of the paper.

\begin{proof}[Proof of \cref{t:dualMLdegree}]
We induct on the number of  internal vertices of $T$. When $T$ has one internal vertex $v$, it is a star tree. So by Theorem~\ref{thm:dualMLstarT}, the dual ML-degree of $\Mcal_T$ is $2^{\operatorname{outdeg}(v)} - \operatorname{outdeg}(v) -1$.

Take a rooted tree $T$ with at least two internal vertices. Choose $\ell$ to be one of the internal vertices of $T$ that has only leaves as direct descendants. Let $h$ be the unique direct ancestor of $\ell$. Take $S_{\operatorname{outdeg}(\ell)}$ to be the rooted star tree with internal vertex $\ell$, root leaf $h$, and the remaining leaves  are exactly the descendants of $\ell$ in $T$. Take $T'$ to be the rooted tree obtained by removing from $T$ all leaves descendent of $\ell$.   Identifying   $h$ and $\ell$ in $S_{\operatorname{outdeg}(h)}$ and $T'$ gives back the tree $T$. Moreover, we have that $\In(T) = \In(T') \cup \{ \ell \}$. 
By \Cref{cor:MLdegreToricFiberProduct} and the inductive hypothesis, the dual ML-degree of $\Mcal_T$ is
\begin{align*}
\rmld(\Mcal_T)&=(2^{\operatorname{outdeg}(\ell)} - \operatorname{outdeg}(\ell) -1) \prod_{v \in \In(T')}(2^{\operatorname{outdeg}(v)} - \operatorname{outdeg}(v) - 1) \\&= \prod_{v \in \In(T)}(2^{\operatorname{outdeg}(v)} - \operatorname{outdeg}(v) - 1),
\end{align*}
as desired.
\end{proof}

\bibliography{paper}

\end{document}